\numberwithin{equation}{section}
\newtheorem{thm}{Theorem}[section]
\newtheorem{prop}[thm]{Proposition}
\newtheorem{lemm}[thm]{Lemma}
\newtheorem{cor}[thm]{Corollary}
\theoremstyle{remark}
\newtheorem{rem}{Remark}[section]
\newtheorem{defn}{Definition}
\newcommand{\BBB}{\mathbb}
\newcommand{\R}{{\BBB R}}
\newcommand{\Z}{{\BBB Z}}
\newcommand{\N}{{\BBB N}}
\newcommand{\C}{{\BBB C}}
\newcommand{\ee}{\mbox{\boldmath $1$}}
\newcommand{\LR}[1]{{\langle {#1} \rangle }}
\newcommand{\cross}{\times}
\newcommand{\al}{\alpha}
\newcommand{\be}{\beta}
\newcommand{\ga}{\gamma}
\newcommand{\e}{\varepsilon}
\newcommand{\ta}{\tau}
\newcommand{\p}{\partial}
\newcommand{\supp}{\operatorname{supp}}
\newcommand{\kuuhaku}{\text{}}
\newcommand{\F}{\mathcal{F}}
\title[Sharp bilinear estimates for system of qDNLS]{Sharp bilinear estimates 
and its application to a system of quadratic derivative
nonlinear Schr\"odinger equations
}
\author[H. Hirayama]{Hiroyuki Hirayama}
\address[H. Hirayama]{Organization for Promotion of Tenure Track, University of Miyazaki, 1-1, Gakuenkibanadai-nishi, Miyazaki, 889-2192 Japan}
\email[H. Hirayama]{h.hirayama@cc.miyazaki-u.ac.jp}
\author[S. Kinoshita]{Shinya Kinoshita}
\address[S. Kinoshita]{Graduate School of Mathematics, Nagoya University,
Chikusa-ku, Nagoya, 464-8602, Japan}
\email[S. Kinoshita]{m12018b@math.nagoya-u.ac.jp}
\subjclass[2010]{35Q55}
\keywords{Schr\"odinger equation, well-posedness, Cauchy problem,  Bilinear estimate, Loomis-Whitney inequality}
\begin{document}
\begin{abstract}
In the present paper, 
we consider the Cauchy problem of the system of quadratic derivative nonlinear 
Schr\"odinger equations for the spatial dimension $d=2$ and $3$. 
This system was introduced by M. Colin and T. Colin (2004). 
The first author obtained some well-posedness 
results in the Sobolev space $H^{s}$. 
But under some condition for the coefficient of Laplacian, 
this result is not optimal.  
We improve the bilinear estimate
by using the nonlinear version of the classical Loomis-Whitney inequality,  
and prove the well-posedness in $H^s$ for $s\ge 1/2$ if $d=2$, 
and $s>1/2$ if $d=3$.
\end{abstract}
\maketitle
\setcounter{page}{001}


\section{Introduction\label{intro}}
We consider the Cauchy problem of the system of nonlinear Schr\"odinger equations:
\begin{equation}\label{NLS_sys}
\begin{cases}
\displaystyle (i\p_{t}+\alpha \Delta )u=-(\nabla \cdot w )v,
\ \ t>0,\ x\in \R^d,\\
\displaystyle (i\p_{t}+\beta \Delta )v=-(\nabla \cdot \overline{w})u,
\ \ t>0,\ x\in \R^d,\\
\displaystyle (i\p_{t}+\gamma \Delta )w =\nabla (u\cdot \overline{v}),
\ \ t>0,\ x\in \R^d,\\
(u(0,x), v(0,x), w(0,x))=(u_{0}(x), v_{0}(x), w_{0}(x)),\hspace{2ex}x\in \R^{d},
\end{cases}
\end{equation}
where $\al$, $\be$, $\ga\in \R\backslash \{0\}$, $d=2$ or $3$, 
and the unknown functions $u$, $v$, $w$ are $d$-dimensional complex vector valued. 
The system (\ref{NLS_sys}) was introduced by Colin and Colin in \cite{CC04} 
as a model of laser-plasma interaction. (See, also \cite{CC06}, \cite{CCO09_1}.)
They also showed that the 
local existence of the solution of (\ref{NLS_sys}) in $H^s$ for $s>d/2+3$. 
The system (\ref{NLS_sys}) is invariant under the following scaling transformation:
\begin{equation}\label{scaling_tr}
A_{\lambda}(t,x)=\lambda^{-1}A(\lambda^{-2}t,\lambda^{-1}x)\ \ (A=(u,v,w) ), 
\end{equation}
and the scaling critical regularity is $s_{c}=d/2-1$. 
We put
\begin{equation}\label{coeff}
\theta :=\alpha\beta\gamma \left(\frac{1}{\alpha}-\frac{1}{\beta}-\frac{1}{\gamma}\right),\ \ 
\kappa :=(\alpha -\beta)(\alpha -\gamma)(\beta +\gamma). 
\end{equation}
We note that $\kappa =0$ does not occur when $\theta > 0$. 

First, we introduce some known results for related problems. 
The system (\ref{NLS_sys}) has quadratic nonlinear terms which contains a derivative. 
A derivative loss arising from the nonlinearity makes the problem difficult. 
In fact, Mizohata (\cite{Mi85}) considered the Schr\"odinger equation
\[
\begin{cases}
i\partial_{t}u-\Delta u=(b_{1}(x)\cdot \nabla ) u,\ t\in \R ,\ x\in \R^{d},\\
u(0,x)=u_{0}(x),\ x\in \R^{d}
\end{cases}
\]
and proved that the uniform bound
\[
\sup_{x\in \R^{n},\omega \in S^{n-1},R>0}\left| {\rm Re}\int_{0}^{R}b_{1}(x+r\omega )\cdot \omega dr\right| <\infty.
\]
is necessary condition for the $L^{2}$ well-posedness. 
Furthermore, Christ (\cite{Ch}) proved that the flow map of 
the nonlinear Schr\"odinger equation
\begin{equation}\label{1dqdnls}
\begin{cases}
i\partial_{t}u-\partial_{x}^{2}u=u\partial_{x}u,\ t\in \R,\ x\in \R,\\
u(0,x)=u_{0}(x),\ x\in \R
\end{cases}
\end{equation}
is not continuous on $H^{s}$ for any $s\in \R$. 
From these results, 
it is difficult to obtain the well-posedness 
for quadratic derivative nonlinear Schr\"odinger equation in general. 
While for the system of quadratic derivative nonlinear equation, 
it is known that the well-posedness holds. 
In \cite{Hi}, the first author proved that
(\ref{NLS_sys}) with $d=2$ or $3$ is well-posed in $H^s$ for $s\ge s_c$
if $\theta >0$, and for $s\ge 1$ if $\theta \le 0$ and $\kappa \ne 0$. 
The first author also proved that 
the flow map is not $C^2$ for $s<1$ if $\theta = 0$ and $\kappa \ne 0$, 
and for $s<1/2$ if $\theta < 0$ and $\kappa \ne 0$. 
It says that
there is a gap of the regularity between the well-posedness and $C^2$-ill-posedness
under the condition $\theta < 0$ and $\kappa \ne 0$. 
The aim of this paper is to filling this gap.  
The well-posedness for $d=1$ and $d\ge 4$ are also obtained in \cite{Hi}. 
(See, Table~\ref{WP_NLS_sys} below.) 
\begin{table}[h]
\begin{center}
\begin{tabular}{|l|l|l|l|l|}
\hline
\multicolumn{2}{|c|}{} & $d=1$ & $d=2,3$ & $d\geq 4$\\
\hline
\multicolumn{2}{|c|}{$\theta > 0$} & \multicolumn{1}{|c|}{WP for $s\geq 0$} &\multicolumn{1}{|c|}{WP for $s\geq s_{c}$} & \multicolumn{1}{|c|}{WP for $s\geq s_{c}$}\\              
\cline{1-4}
$\kappa \ne 0$ & $\theta =0$ & \multicolumn{1}{|c|}{WP for $s\geq 1$} & 
WP for $s\geq 1$ & \multicolumn{1}{|c|}{}\\
\cline{2-3}
&$\theta < 0$ & \multicolumn{1}{|c|}{WP for $s\geq 1/2$} & &\multicolumn{1}{|c|}{}\\
\hline
\end{tabular}
\caption{Well-posedness (WP for short)  for (\ref{NLS_sys}) proved in \cite{Hi}}
\end{center}
\end{table}\label{WP_NLS_sys}

We point out that the results in \cite{Hi} does not 
contain the scattering of solution for $d\le 3$ under the condition $\theta =0$ 
(and also $\theta <0$). 
In \cite{IKS13}, Ikeda, Katayama, and Sunagawa considered 
the system of quadratic nonlinear Schr\"odinger equations
\begin{equation}\label{qDNLS}
\left(i\partial_t+\frac{1}{2m_j}\Delta\right)u_j=F_j(u,\partial_xu),\ \ t>0,\ x\in \R^d,\ j=1,2,3, 
\end{equation}
under the mass resonance condition 
$m_1+m_2=m_3$ (which corresponds to the condition $\theta =0$ for (\ref{NLS_sys})), 
where $u=(u_1,u_2,u_3)$ is $\C^3$-valued, 
$m_1$, $m_2$, $m_3\in \R\backslash \{0\}$, and $F_j$ is defined by
\begin{equation}\label{qDNLS_nonlin}
\begin{cases}
F_{1}(u,\partial_xu)=\sum_{|\alpha |, |\beta|\le 1}
C_{1,\alpha,\beta}(\overline{\partial^{\alpha}u_2})(\partial^{\beta}u_3),\\
F_{2}(u,\partial_xu)=\sum_{|\alpha |, |\beta|\le 1}
C_{1,\alpha,\beta}(\partial^{\beta}u_3)(\overline{\partial^{\alpha}u_1}),\\
F_{3}(u,\partial_xu)=\sum_{|\alpha |, |\beta|\le 1}
C_{1,\alpha,\beta}(\partial^{\alpha}u_1)(\partial^{\beta}u_2)
\end{cases}
\end{equation}
with some constants $C_{1,\alpha,\beta}$, $C_{2,\alpha,\beta}$, $C_{3,\alpha,\beta}\in \C$. 
They obtained the small data global existence and the scattering 
of the solution to (\ref{qDNLS})
in the weighted Sobolev space for $d=2$ 
under the mass resonance condition
and the null condition for the nonlinear terms (\ref{qDNLS_nonlin}). 
They also proved the same result for $d\ge 3$ without the null condition. 
In \cite{IKO16}, Ikeda, Kishimoto, and Okamoto proved
the small data global well-posedness and the scattering of the solution 
to (\ref{qDNLS}) in $H^s$ for $d\ge 3$ and $s\ge s_c$ 
under the mass resonance condition
and the null condition for the nonlinear terms (\ref{qDNLS_nonlin}). 
They also proved the local well-posedness in $H^s$
for $d=1$ and $s\ge 0$, $d=2$ and $s>s_c$, and $d=3$ and $s\ge s_c$ 
under the same conditions. 
(The results in \cite{Hi} for $d\le 3$ and $\theta =0$ 
says that if the nonlinear terms do not have null condition, 
then $s=1$ is optimal regularity to obtain the well-posedness 
by using the iteration argument. ) 
While, it is known that the existence of the blow up solutions 
for the system of nonlinear Schr\"odinger equations. 
Ozawa and Sunagawa (\cite{OS13}) gave the examples of the derivative nonlinearity which causes the small data blow up for a system of Schr\"odinger equations. 
There are also some known results for a system of nonlinear Schr\"odinger equations with no derivative nonlinearity 
(\cite{HLN11}, \cite{HLO11}, \cite{HOT13}). 

To give the main results of the present paper, we first define the 
function space of the Fourier restriction norm.
\begin{defn}
Let $s\in \R$, $b\in \R$, $\sigma \in \R\backslash \{0\}$. \\
(i)\ For $1\le p < \infty$, we define the function space $X^{s,b,p}_{\sigma}$ as the completion of the Schwartz class ${\mathcal S}(\R\times \R^d)$ with the norm
\[
\|u\|_{X^{s,b,p}_{\sigma}}=\left\{\sum_{N\ge 1}N^{2s}
\left(\sum_{L\ge 1}L^{pb}\|Q_{L}^{\sigma}P_{N}u\|_{L^2}^p\right)^\frac{2}{p}\right\}^{\frac{1}{2}},
\]
where $P_N$ and $Q_L^{\sigma}$ will be defined in the last part of this section. \\
(ii)\ We define the function space $X^{s,b,\infty}_{\sigma}$ as the completion of the Schwartz class ${\mathcal S}(\R\times \R^d)$ with the norm
\[
\|u\|_{X^{s,b,\infty}_{\sigma}}=\left\{\sum_{N\ge 1}N^{2s}
\left(\sup_{L\ge 1}L^{b}\|Q_{L}^{\sigma}P_{N}u\|_{L^2}\right)^2\right\}^{\frac{1}{2}}. 
\]
(iii)\ For $1\le p\le \infty$ and $T>0$, 
we define the time localized space $X^{s,b,p}_{\sigma, T}$ as
\[
X^{s,b,p}_{\sigma, T}=\{u|_{[0,T]}|u\in X_{\sigma}^{s,b,p}\}
\]
with the norm
\[
\|u\|_{X^{s,b,p}_{\sigma,T}}=\inf \{\|v\|_{X^{s,b,p}_{\sigma}}|v\in X^{s,b,p}_{\sigma},\ v|_{[0,T]}=u|_{[0,T]}\}.
\]
\end{defn}
The Fourier restriction norm is first introduced by Bourgain in \cite{Bo93}. 
\begin{rem}
The initial datum and the solutions for (\ref{NLS_sys}) 
are $\C^d$-valued function. 
Therefore, $u\in X^{s,b,p}_{\sigma}$ means 
$u^{(j)}\in X^{s,b,p}_{\sigma}$\ ($j=1,\cdots, d)$ and 
$\|u\|_{X^{s,b,p}_{\sigma}}$ means $\sum_{j=1}^d\|u^{(j)}\|_{X^{s,b,p}_{\sigma}}$ 
for $u=(u^{(1)},\cdots,u^{(d)})$
in this paper. 
Similarly, $u_0\in H^s$ means $u_0^{(j)}\in H^s$\ ($j=1,\cdots, d)$ and 
$\|u_0\|_{H^s}$ means $\sum_{j=1}^d\|u_0^{(j)}\|_{H^s}$ 
for $u_0=(u_0^{(1)},\cdots,u_0^{(d)})$. 
\end{rem}
Now, we give the main results in this paper. 
For a Banach space $H$ and $r>0$, we define $B_r(H):=\{ f\in H \,|\, \|f\|_H \le r \}$. 
\begin{thm}\label{wellposed_1}
We assume that $\alpha$, $\beta$, $\gamma \in \R\backslash \{0\}$ satisfy
$\theta <0$ and $\kappa \ne 0$.  \\
{\rm (i)} Let $d=2$ and $s\ge \frac{1}{2}$. Then, 
 {\rm (\ref{NLS_sys})} is locally well-posed in $H^{s}$. 
More precisely, for any $r>0$ and for all initial data $(u_{0}, v_{0}, w_{0})\in B_{r}(H^{s}\times H^{s}\times H^{s})$, there exist $T=T(r)>0$ and a solution
\[
(u,v,w)\in X^{s,\frac{1}{2},1}_{\alpha,T}\times X^{s,\frac{1}{2},1}_{\beta,T}\times X^{s,\frac{1}{2},1}_{\gamma,T}
\]
of the system {\rm (\ref{NLS_sys})} on $[0, T]$. 
Such solution is unique in $B_R(X^{s,\frac{1}{2},1}_{\alpha,T}\times X^{s,\frac{1}{2},1}_{\beta,T}\times X^{s,\frac{1}{2},1}_{\gamma,T})$ for some $R>0$. 
Moreover, the flow map
\[
S_{+}:B_{r}(H^{s}\times H^{s}\times H^{s})\ni (u_{0},v_{0},w_{0})\mapsto (u,v,w)\in X^{s,\frac{1}{2},1}_{\alpha,T}\times X^{s,\frac{1}{2},1}_{\beta,T}\times X^{s,\frac{1}{2},1}_{\gamma,T}
\]
is Lipschitz continuous. \\
{\rm (ii)} Let $d=3$ and $s> \frac{1}{2}$. 
Then, {\rm (\ref{NLS_sys})} is locally well-posed in $H^{s}$. 
\end{thm}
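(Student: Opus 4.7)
The plan is a standard Picard iteration. By Duhamel's formula, a solution to (\ref{NLS_sys}) is a fixed point of the map $\Phi=(\Phi_1,\Phi_2,\Phi_3)$ whose first component is
\[
\Phi_1(u,v,w)(t) = e^{it\alpha\Delta}u_0 + i\int_0^t e^{i(t-t')\alpha\Delta}(\nabla\cdot w)(t')v(t')\,dt',
\]
the other two components being analogous. I would run this iteration in a small ball of
\[
Y_T^s := X^{s,1/2,1}_{\alpha,T}\times X^{s,1/2,1}_{\beta,T}\times X^{s,1/2,1}_{\gamma,T}.
\]
The standard linear and inhomogeneous estimates for $X^{s,b,1}$-spaces are well known (and the $\ell^1_L$-modulation choice is what provides the embedding into $C_tH^s_x$ at the endpoint $b=1/2$). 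The whole problem therefore reduces to proving the bilinear estimate
\[
\|(\nabla\cdot w)v\|_{X^{s,-1/2,\infty}_{\alpha,T}} \lec T^{\e}\|w\|_{X^{s,1/2,1}_{\gamma,T}}\|v\|_{X^{s,1/2,1}_{\beta,T}}
\]
together with its two analogues for the equations on $v$ and $w$; contraction and Lipschitz dependence on the data then follow routinely from the $T^{\e}$-factor.

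After Littlewood--Paley decomposition in spatial frequency (dyadic $N,N_1,N_2$) and in modulation (dyadic $L,L_1,L_2$ relative to the appropriate characteristic), Plancherel and duality reduce the bilinear estimate to dyadic trilinear $L^2$-integrals. The associated resonance function is
\[
\Phi(\xi_1,\xi_2) = \alpha|\xi_1+\xi_2|^2-\beta|\xi_1|^2-\gamma|\xi_2|^2,
\]
and because $\theta<0$ one does \emph{not} have the favorable lower bound $|\Phi|\gec N_{\max}^2$ that drives the proof in the $\theta>0$ case. The dangerous regime is the high-high-to-low interaction $N_1\sim N_2\gg N$ with all three modulations small: here one must exploit transversality of the three characteristic paraboloids rather than a resonance gain, and this is the main obstacle.

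The key device is a nonlinear Loomis--Whitney trilinear $L^2$-estimate: when three $L^2$-functions are Fourier-supported in caps on the paraboloids $\{\tau=\sigma|\xi|^2\}$ ($\sigma\in\{\alpha,\beta,\gamma\}$) whose unit normals span $\R^{d+1}$ uniformly, one has a bound
\[
\Big|\int f_0\, f_1\, f_2\Big| \lec (\textrm{transversality})^{-1/2}\prod_{j=0}^{2}\|f_j\|_{L^2},
\]
with explicit numerology in the cap sizes. I would dyadically decompose the high-high-to-low region into angular sectors on which the required quantitative transversality holds, using the coefficient condition $\kappa=(\alpha-\beta)(\alpha-\gamma)(\beta+\gamma)\neq 0$ to exclude precisely the degenerate directions in which two of the paraboloids could be nearly parallel; on the remaining thin near-parallel sector the classical bilinear Strichartz estimate suffices because its volume is small. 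Assembling these pieces and performing the dyadic sums --- using the $\ell^1_L$--$\ell^\infty_L$ duality at the modulation level and the $\ell^2_N$ structure of the $H^s$-norm --- yields the bilinear estimate at $s=1/2$ when $d=2$. For $d=3$ the same analysis produces a logarithmic divergence in the dyadic sum at the endpoint, forcing the strict inequality $s>1/2$. With these bilinear estimates in hand, the contraction in $Y_T^s$ supplies existence, uniqueness in some ball, and the Lipschitz flow.
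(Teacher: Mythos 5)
Your overall architecture --- contraction in $X^{s,\frac{1}{2},1}_{\sigma,T}$, reduction to dyadic trilinear $L^2$ estimates, bilinear Strichartz in the high-modulation regime, and an angularly decomposed nonlinear Loomis--Whitney estimate in the low-modulation regime --- is exactly the paper's strategy. However, two points in your description are wrong, and the first would genuinely derail the execution. You identify the dangerous low-modulation regime as the high-high-to-low interaction $N_1\sim N_2\gg N$. In fact that configuration is \emph{empty} in the low-modulation regime: the hypothesis $\kappa\ne 0$, i.e. $(\sigma_1+\sigma_2)(\sigma_2+\sigma_3)(\sigma_3+\sigma_1)\ne 0$ for the relevant triples, implies (Lemma~\ref{modul_est}, quoted from \cite{Hi}) that any frequency-separated interaction forces $\max_j|\tau_j+\sigma_j|\xi_j|^2|\gtrsim N_{\max}^2$; its contrapositive (Lemma~\ref{modul_est_2}) is what the paper uses, namely that $L_{\max}\ll N_{\max}^2$ forces $|\xi_1|\sim|\xi_2|\sim|\xi_3|$. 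The genuinely difficult case is therefore the \emph{balanced} interaction $N_1\sim N_2\sim N_3$ with all modulations small --- this is the ``resonant'' interaction for which \cite{Hi} needed $s\ge 1$ --- and that is where the angular decomposition and Proposition~\ref{prop2.7} are deployed. Your plan as written applies the transversality machinery to a vacuous case and leaves the real one untreated (though the same tools do handle it). Second, you attribute the quantitative transversality of the three paraboloids to $\kappa\ne 0$; in the paper the transversality constant in the Loomis--Whitney application is $\gtrsim|\theta|A^{-1}$ (see \eqref{trans} and the determinant computation closing the proof of Proposition~\ref{thm2.6}), so it is $\theta\ne 0$ that rules out degenerate normals, while $\kappa\ne 0$ plays the separate, prior role just described. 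The remaining ingredients you list --- the $A^{-1/2}$-type gain on near-parallel sectors, the logarithmic loss in the sum over angular scales for $d=3$ forcing $s>\frac{1}{2}$, and extracting the $T^{\epsilon}$ factor by proving the bilinear estimate with $b'<\frac{1}{2}$ --- all match the paper.
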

We make a comment on Theorem \ref{wellposed_1}. 
In \cite{Hi}, the first author proved that 
the flow map is not $C^2$ for $s<1/2$ 
under the condition $\theta <0$ and $\kappa \ne 0$. 
Therefore, the above result is optimal as long as we use the iteration argument. 
In \cite{Hi}, we needed the condition $s \geq 1$ to show the key nonlinear estimates for ``resonance'' interactions which is the most difficult interactions to estimate since we cannot recover a derivative loss from modulations. 
To overcome this, we employ a new estimate which was introduced in \cite{BKW}, \cite{BHT10} 
and applied to the Zakharov system in \cite{BHHT09} and \cite{BH11}. 
See Proposition \ref{prop2.7} below.
\kuuhaku \\

\noindent {\bf Notation.} 
We denote the spatial Fourier transform by\ \ $\widehat{\cdot}$\ \ or $\F_{x}$, 
the Fourier transform in time by $\F_{t}$ and the Fourier transform in all variables by\ \ $\widetilde{\cdot}$\ \ or $\F_{tx}$. 
For $\sigma \in \R$, the free evolution $e^{it\sigma \Delta}$ on $L^{2}$ is given as a Fourier multiplier
\[
\F_{x}[e^{it\sigma \Delta}f](\xi )=e^{-it\sigma |\xi |^{2}}\widehat{f}(\xi ). 
\]
We will use $A\lesssim B$ to denote an estimate of the form $A \le CB$ for some constant $C$ and write $A \sim B$ to mean $A \lesssim B$ and $B \lesssim A$. 
We will use the convention that capital letters denote dyadic numbers, e.g. $N=2^{n}$ for $n\in \N_0:=\N\cup\{0\}$ and for a dyadic summation we write
$\sum_{N}a_{N}:=\sum_{n\in \N_0}a_{2^{n}}$ and $\sum_{N\geq M}a_{N}:=\sum_{n\in \N_0, 2^{n}\geq M}a_{2^{n}}$ for brevity. 
Let $\chi \in C^{\infty}_{0}((-2,2))$ be an even, non-negative function such that $\chi (t)=1$ for $|t|\leq 1$. 
We define $\psi (t):=\chi (t)-\chi (2t)$, 
$\psi_1(t):=\chi (t)$, and $\psi_{N}(t):=\psi (N^{-1}t)$ for $N\ge 2$. 
Then, $\sum_{N}\psi_{N}(t)=1$.  
We define frequency and modulation projections
\[
\widehat{P_{N}u}(\xi ):=\psi_{N}(\xi )\widehat{u}(\xi ),\ 
\widetilde{Q_{L}^{\sigma}u}(\tau ,\xi ):=\psi_{L}(\tau +\sigma |\xi|^{2})\widetilde{u}(\tau ,\xi ).
\]
Furthermore, we define $Q_{\geq M}^{\sigma}:=\sum_{L\geq M}Q_{L}^{\sigma}$ and $Q_{<M}:=Id -Q_{\geq M}$. 

The rest of this paper is planned as follows.
In Section 2, we will give the bilinear estimates which will be used to prove the well-posedness.
In Section 3, we will give the proof of the well-posedness. 
\section{Bilinear estimates \label{be_for_2d}}
%
%
In this section, we prove the following bilinear estimate which plays a central role to show Theorem \ref{wellposed_1}.
\begin{prop}\label{key_be}
Let $s\ge \frac{1}{2}$ if $d=2$ and $s>\frac{1}{2}$ if $d=3$.  
Let $\sigma_1$, $\sigma_2$, $\sigma_3\in \R\backslash \{0\}$ 
satisfy $\sigma_1\sigma_2\sigma_3(\frac{1}{\sigma_1}+\frac{1}{\sigma_2}+\frac{1}{\sigma_3})<0$ and $(\sigma_1+\sigma_2)(\sigma_2+\sigma_3)(\sigma_3+\sigma_1)\ne 0$. 
Then there exists $b'\in (0,\frac{1}{2})$ and $C>0$ such that
\begin{align*}
 \|\nabla (u \cdot v)\|_{X^{s,-b',\infty}_{-\sigma_3}}
+ \|(\nabla \cdot u)  v\|_{X^{s,-b',\infty}_{-\sigma_3}}  
+ \|u (\nabla \cdot v ) \|_{X^{s,-b',\infty}_{-\sigma_3}}& \\
\le C\|u\|_{X^{s,b',1}_{\sigma_1}} & \|v\|_{X^{s,b',1}_{\sigma_2}}. 
\end{align*}
\end{prop}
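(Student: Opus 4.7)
The plan is to combine standard Fourier restriction machinery with the nonlinear Loomis--Whitney inequality (Proposition~2.7) to handle the resonant interactions that forced the threshold $s \ge 1$ in \cite{Hi}. First I would reduce the estimate by duality: the dual of $X^{s,-b',\infty}_{-\sigma_3}$ under the Plancherel pairing is $X^{-s,b',1}_{-\sigma_3}$, so the conclusion is equivalent, up to relabelling the three factors, to a single trilinear $L^2_{t,x}$ estimate of the shape
\begin{equation*}
\left|\int\!\!\int (\nabla \cdot u_1)(u_2 \cdot u_3)\, dt\, dx\right| \lesssim \|u_1\|_{X^{s,b',1}_{\sigma_1}}\|u_2\|_{X^{s,b',1}_{\sigma_2}}\|u_3\|_{X^{-s,b',1}_{-\sigma_3}}.
\end{equation*}
The three terms in the proposition differ only in which factor carries the derivative and can therefore be handled by the same argument after relabelling.

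Next I would perform a dyadic Littlewood--Paley decomposition $u_i = \sum_{N_i, L_i} P_{N_i} Q_{L_i}^{\sigma_i'} u_i$, with $\sigma_i' = \sigma_i$ for $i=1,2$ and $\sigma_3' = -\sigma_3$, and order the spatial frequencies $N_1 \le N_2 \le N_3$. The convolution constraint $\xi_1 + \xi_2 + \xi_3 = 0$ forces $N_2 \sim N_3$, and the resonance identity
\begin{equation*}
\sum_{i=1}^{3}(\tau_i + \sigma_i'|\xi_i|^2) = \Phi(\xi_1,\xi_2,\xi_3)
\end{equation*}
bounds $\max_i L_i \gtrsim |\Phi|$, where $\Phi$ is the phase function of the three-paraboloid interaction. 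The analysis then splits into two regimes. In the high-modulation regime, $\max_i L_i \gtrsim N_{\max}^2$, the modulation gain more than compensates the derivative loss, and the estimate closes via standard $L^4_{t,x}$ Strichartz bounds together with Cauchy--Schwarz in the dyadic indices.

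In the low-modulation regime, $\max_i L_i \ll N_{\max}^2$, I would invoke Proposition~2.7: after subdividing each annulus $\{|\xi| \sim N_i\}$ into smaller cubes of side $\sim N_{\max}^{1/2}$, on each triple of cubes for which the three normals $(1, 2\sigma_i'\xi_i)$ to the characteristic paraboloids span $\R^{d+1}$ with a quantitative lower bound, the trilinear $L^2_{t,x}$ convolution is controlled by a product of $L^2$ norms with a transversality-determined gain. The hypotheses $(\sigma_1+\sigma_2)(\sigma_2+\sigma_3)(\sigma_3+\sigma_1)\ne 0$ and $\sigma_1\sigma_2 + \sigma_2\sigma_3 + \sigma_3\sigma_1 < 0$ are precisely what ensures this transversality holds away from a thin angular set.

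The main obstacle will be the degenerate angular subregion in which $\xi_1$ and $\xi_2$ are nearly collinear, where the Loomis--Whitney gain degenerates. In that geometry, however, the sign assumption on the coefficients forces a lower bound on the resonance $|\Phi|$, which pushes one of the modulations $L_i$ back to the scale $N_{\max}^2$ and returns the estimate to the high-modulation regime, at the price of a careful angular decomposition. Summation over the dyadic parameters then closes thanks to the $\ell^1$-in-modulation structure of $X^{s,b',1}_{\sigma}$: the arithmetic balances exactly at $s = 1/2$ when $d = 2$, while in $d = 3$ an unavoidable logarithmic factor from the higher-dimensional transversal summation forces the strict inequality $s > 1/2$.
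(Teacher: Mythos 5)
Your overall architecture --- duality, dyadic decomposition, the split into $L_{\max}\gtrsim N_{\max}^2$ versus $L_{\max}\ll N_{\max}^2$, bilinear Strichartz in the first regime, and the nonlinear Loomis--Whitney inequality of Proposition \ref{prop2.7} in the second --- matches the paper. The genuine gap is your treatment of the degenerate angular region. You claim that when $\xi_1$ and $\xi_2$ are nearly collinear, the sign hypothesis forces a lower bound on the resonance $|\Phi|$ of order $N_{\max}^2$ and returns you to the high-modulation regime. This is false. Writing $\xi_3=-(\xi_1+\xi_2)$, the resonance function is
\begin{equation*}
\Phi=(\sigma_1+\sigma_3)|\xi_1|^2+2\sigma_3\,\xi_1\cdot\xi_2+(\sigma_2+\sigma_3)|\xi_2|^2,
\end{equation*}
and along a collinear ray $\xi_2=t\xi_1$ this equals $|\xi_1|^2\bigl((\sigma_2+\sigma_3)t^2+2\sigma_3t+(\sigma_1+\sigma_3)\bigr)$, a quadratic in $t$ whose discriminant is $-4(\sigma_1\sigma_2+\sigma_2\sigma_3+\sigma_3\sigma_1)$. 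The hypothesis $\sigma_1\sigma_2\sigma_3(\sigma_1^{-1}+\sigma_2^{-1}+\sigma_3^{-1})<0$ makes this discriminant \emph{positive}, so there exist exactly resonant configurations with $\xi_1\parallel\xi_2$ and all three frequencies nonzero and comparable (one checks $t\neq 0,-1$ using $(\sigma_1+\sigma_3)(\sigma_1+\sigma_2)\neq 0$). At such points the Loomis--Whitney transversality constant, which is comparable to $|\theta|\,|\sin\angle(\xi_1,\xi_2)|$, degenerates \emph{and} no modulation lower bound is available, so your proposal has no mechanism that closes there; this is precisely the resonant interaction that forced $s\ge 1$ in \cite{Hi}.

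The paper closes this region differently. It performs a Whitney decomposition in the angle at dyadic scales $64\le A\le M_0:=L_{\max}^{-1/2}N_{\max}$, applies Proposition \ref{prop2.7} with transversality $d\sim|\theta|MA^{-1}$ only on pieces with $\angle(\xi_1,\xi_2)\sim A^{-1}$ (Proposition \ref{thm2.6}), and for the residual cone $\angle(\xi_1,\xi_2)\lesssim M_0^{-1}$ proves a separate almost-parallel bilinear estimate (Theorem \ref{thm-0.3}): for a fixed direction of $\xi_1$, the constraints $|\tau_i+\sigma_i|\xi_i|^2|\lesssim L_{\max}\ll|\theta|N_{\min}^2$ together with the identity \eqref{modu-1-8} confine $|\xi_1|$ to a set of measure $O(L_{\max}/N_1)$ --- here the sign condition enters through $|\theta|-\sigma_2^2\sin^2\angle(\xi,\xi_1)>|\theta|/2$, a measure (Jacobian) bound rather than a modulation bound --- yielding $|E(\tau,\xi)|\lesssim A^{-1}L_1L_2$ and the gain $A^{-1/2}(L_1L_2)^{1/2}$ by Cauchy--Schwarz. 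Relatedly, your decomposition into cubes of side $N_{\max}^{1/2}$ does not produce an angle-dependent transversality constant that is summable; the dyadic summation over the angular parameter $A$ is what balances the $d^{-1/2}$ loss from Proposition \ref{prop2.7} against the $A^{-1/2}$ gain from the parallel pieces. Your remark that a logarithmic loss in the $d=3$ summation forces $s>1/2$ is, however, consistent with the proof of Proposition \ref{thm2.6-3d}.
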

To prove Proposition~\ref{key_be}, we first give the Strichartz estimate. 
\begin{prop}[Strichartz estimate]\label{Stri_est}
Let $\sigma \in \R\backslash \{0\}$ and $(p,q)$ be an admissible pair of exponents for the Schr\"odinger equation, i.e. $p\ge 2$, 
$\frac{2}{p} =d(\frac{1}{2}-\frac{1}{q})$, $(d,p,q)\ne (2,2,\infty)$. Then, we have
\[
\|e^{it\sigma \Delta}\varphi \|_{L_{t}^{p}L_{x}^{q}}\lesssim \|\varphi \|_{L^{2}_{x}}.
\]
for any $\varphi \in L^{2}(\R^{d})$. 
\end{prop}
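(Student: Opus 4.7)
The plan is to reduce to $\sigma=1$ by a time rescaling and then run the standard $TT^{\ast}$ argument of Strichartz. First I would substitute $\tau=\sigma t$ in the time integral to obtain
\[
\|e^{it\sigma\Delta}\varphi\|_{L^p_t L^q_x}=|\sigma|^{-1/p}\|e^{i\tau\Delta}\varphi\|_{L^p_\tau L^q_x},
\]
so it suffices to treat $\sigma=1$. The two building blocks are then the mass conservation $\|e^{it\Delta}\varphi\|_{L^2_x}=\|\varphi\|_{L^2_x}$ coming from Plancherel, and the dispersive estimate $\|e^{it\Delta}\varphi\|_{L^\infty_x}\lesssim |t|^{-d/2}\|\varphi\|_{L^1_x}$, the latter coming from the explicit form $(4\pi it)^{-d/2}e^{i|x|^2/4t}$ of the free Schr\"odinger kernel. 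Complex interpolation between these yields the pointwise-in-time decay $\|e^{it\Delta}\|_{L^{q'}\to L^q}\lesssim |t|^{-d(1/2-1/q)}$ for $2\le q\le\infty$.

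Next I would invoke duality: the claimed inequality is equivalent to $\|\int e^{-it\Delta}F(t)\,dt\|_{L^2_x}\lesssim \|F\|_{L^{p'}_t L^{q'}_x}$, which after squaring reduces to the bilinear bound
\[
\Big|\iint \langle e^{i(t-s)\Delta}F(s),F(t)\rangle\,ds\,dt\Big|\lesssim \|F\|_{L^{p'}_t L^{q'}_x}^2.
\]
Inserting the pointwise-in-time decay above together with H\"older in $x$, and using the admissibility relation $2/p=d(1/2-1/q)$, reduces this to the fractional integration inequality
\[
\Big\|\int |t-s|^{-2/p}\|F(s)\|_{L^{q'}_x}\,ds\Big\|_{L^p_t}\lesssim \|F\|_{L^{p'}_t L^{q'}_x},
\]
which is precisely Hardy-Littlewood-Sobolev in the time variable whenever $p>2$. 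This closes every non-endpoint admissible pair.

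The main obstacle is the double endpoint $p=2$, $q=2d/(d-2)$, which is relevant here when $d=3$ and where Hardy-Littlewood-Sobolev breaks down. For this I would invoke the Keel-Tao argument: dyadically split the time separation $|t-s|\sim 2^j$, prove a scale-invariant bilinear bound on each piece via real interpolation between the energy and dispersive estimates with a small gain in $j$, and sum against an atomic decomposition of $F$. In practice one may simply cite the Keel-Tao endpoint theorem for this single case rather than reproduce the argument.
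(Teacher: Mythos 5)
Your proof is correct and is the standard argument: reduction to $\sigma=1$ by rescaling time, the $TT^{*}$ method combining $L^2$ conservation with the $|t|^{-d/2}$ dispersive bound and Hardy--Littlewood--Sobolev in $t$ for $p>2$, and the Keel--Tao argument for the endpoint $p=2$, which here only arises for $d=3$ since $(d,p,q)=(2,2,\infty)$ is excluded. The paper offers no proof of this proposition --- it is quoted as the classical Strichartz estimate --- so there is nothing to compare against; your write-up is the accepted derivation.
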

The Strichartz estimate implies the following. 
(See the proof of Lemma\ 2.3 in \cite{GTV97}.)
\begin{cor}\label{Bo_Stri}
Let $L\in 2^{\N_0}$, $\sigma \in \R\backslash \{0\}$, 
and $(p,q)$ be an admissible pair of exponents for the Schr\"odinger equation. 
Then, we have
\begin{equation}\label{Stri_est_2}
\|Q_{L}^{\sigma}u\|_{L_{t}^{p}L_{x}^{q}}\lesssim L^{\frac{1}{2}}\|Q_{L}^{\sigma}u\|_{L^{2}_{tx}}.
\end{equation}
for any $u \in L^{2}(\R\times \R^{d})$. 
\end{cor}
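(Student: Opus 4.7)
The plan is to reduce Corollary~\ref{Bo_Stri} to Proposition~\ref{Stri_est} via the standard Bourgain-type transference principle. Setting $v := Q_{L}^{\si} u$, the space-time Fourier transform $\ti v(\ta,\x)$ is supported in the slab $\{|\ta + \si|\x|^2| \lec L\}$. Making the change of variables $\la := \ta + \si|\x|^2$ in the Fourier inversion formula, one represents $v$ as a superposition of modulated free Schr\"odinger evolutions,
\[
v(t,x) \;=\; \frac{1}{2\pi} \int e^{it\la}\, \bigl(e^{it\si \Delta} f_{\la}\bigr)(x)\, d\la,
\qquad \ha{f_{\la}}(\x) := \ti v(\la - \si|\x|^2, \x),
\]
where the $\la$-integration effectively runs over a set of measure $\lec L$. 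This identity is the structural content of the argument: it converts a statement about objects with prescribed modulation size into a question about genuine linear Schr\"odinger solutions, for which Strichartz applies directly.

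Next I would take the $L^{p}_{t}L^{q}_{x}$ norm of both sides, push the $d\la$ integration outside by Minkowski's inequality, discard the unimodular phase $e^{it\la}$, and apply Proposition~\ref{Stri_est} for each fixed $\la$ to obtain
\[
\|v\|_{L^{p}_{t}L^{q}_{x}} \lec \int_{|\la| \lec L} \|f_{\la}\|_{L^{2}_{x}}\, d\la.
\]

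Finally, the weight $L^{1/2}$ arises from Cauchy-Schwarz in the $\la$-variable, exploiting that the integration region has measure $\lec L$; combined with the Plancherel identity $\int \|f_{\la}\|_{L^{2}_{x}}^{2}\, d\la \sim \|v\|_{L^{2}_{tx}}^{2}$ this yields the claimed bound. The argument presents no substantive obstacle: the excluded endpoint $(d,p,q)=(2,2,\infty)$ is inherited verbatim from Proposition~\ref{Stri_est}, and the scheme is otherwise uniform in the admissible exponents. This is precisely the transference strategy carried out in the proof of Lemma~2.3 of \cite{GTV97}.
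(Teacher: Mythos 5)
Your proposal is correct and is precisely the transference argument the paper relies on: the paper gives no proof of its own but cites the proof of Lemma~2.3 in \cite{GTV97}, which proceeds exactly by foliating the modulation variable, representing $Q_L^{\sigma}u$ as a superposition of modulated free evolutions, applying Proposition~\ref{Stri_est} for each fixed $\lambda$, and recovering $L^{1/2}$ via Cauchy--Schwarz on the $\lambda$-slab together with Plancherel.
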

Next, we give the bilinear Strichartz estimate.
\begin{prop}\label{L2be}
Let $d\in \N$, and $\sigma_{1}$, $\sigma_{2}\in \R \backslash \{0\}$ 
satisfy $\sigma_1+\sigma_2\ne 0$. 
For any dyadic numbers $N_1$, $N_2$, $N_3\in 2^{\N_0}$ 
and $L_1$, $L_2\in 2^{\N_0}$, we have
\begin{equation}\label{L2be_est}
\begin{split}
&\|P_{N_3}(Q_{L_1}^{\sigma_1}P_{N_1}u_{1}\cdot Q_{L_2}^{\sigma_2}P_{N_2}u_{2})\|_{L^{2}_{tx}}\\
&\lesssim N_{\min}^{\frac{d}{2}-1}\left(\frac{N_{\min}}{N_{\max}}\right)^{\frac{1}{2}}L_1^{\frac{1}{2}}L_2^{\frac{1}{2}}
\|Q_{L_1}^{\sigma_1}P_{N_1}u_{1}\|_{L^2_{tx}}\|Q_{L_2}^{\sigma_2}P_{N_2}u_{2}\|_{L^2_{tx}}, 
\end{split}
\end{equation}
where $N_{\min}=\displaystyle \min_{1\le i\le 3}N_i$, 
$N_{\max}=\displaystyle \max_{1\le i\le 3}N_i$.
\end{prop}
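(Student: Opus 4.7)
The plan is to reduce the bilinear estimate to a measure bound on a resonance set via Plancherel and Cauchy--Schwarz, and to exploit the hypothesis $\sigma_1+\sigma_2\ne 0$ by completing the square to localize the spatial variable onto a spherical shell.

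Setting $f_j := Q_{L_j}^{\sigma_j}P_{N_j}u_j$ and using $\widetilde{f_1 f_2}=\tilde f_1\ast\tilde f_2$, Plancherel gives
\[
\|P_{N_3}(f_1f_2)\|_{L^2_{tx}}^2 = \int_{|\xi|\sim N_3}\int_{\R}\Bigl|\int\tilde f_1(\tau_1,\xi_1)\tilde f_2(\tau-\tau_1,\xi-\xi_1)\,d\tau_1\,d\xi_1\Bigr|^2 d\tau\,d\xi,
\]
and Cauchy--Schwarz applied to the inner convolution at fixed $(\tau,\xi)$, followed by Fubini, reduces the task to
\[
\sup_{|\xi|\sim N_3,\,\tau\in\R}|A(\tau,\xi)|\,\lesssim\,\frac{N_{\min}^{d-1}}{N_{\max}}L_1L_2,
\]
where $A(\tau,\xi)$ is the set of $(\tau_1,\xi_1)$ meeting the support conditions $|\xi_1|\sim N_1$, $|\xi-\xi_1|\sim N_2$ and the modulation constraints $|\tau_j+\sigma_j|\xi_j|^2|\lesssim L_j$ (with $\tau_2:=\tau-\tau_1$, $\xi_2:=\xi-\xi_1$).

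Integrating out $\tau_1$ first yields a factor $\min(L_1,L_2)$ from the intersection of two intervals, while forcing the spatial condition
\[
\sigma_1|\xi_1|^2+\sigma_2|\xi-\xi_1|^2=-\tau+O(\max(L_1,L_2)).
\]
Since $\sigma_1+\sigma_2\ne 0$, completing the square rewrites this as
\[
(\sigma_1+\sigma_2)\Bigl|\xi_1-\tfrac{\sigma_2}{\sigma_1+\sigma_2}\xi\Bigr|^2 = -\tau-\tfrac{\sigma_1\sigma_2}{\sigma_1+\sigma_2}|\xi|^2+O(\max(L_1,L_2)),
\]
so $\xi_1$ is confined to a spherical shell of radius $R\sim N_{\max}$ (the two largest among $N_1,N_2,N_3$ being comparable by the triangle inequality) and thickness $\sim\max(L_1,L_2)/N_{\max}$. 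In the low-high regime $N_3\sim N_{\max}$, the smallest annulus $|\xi_{i_0}|\sim N_{\min}$ (for some $i_0\in\{1,2\}$) cuts out a small cap on the shell of $(d-1)$-dimensional area $\lesssim N_{\min}^{d-1}$, and multiplying by the shell thickness closes the bound.

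The main obstacle will be the high-high-low regime $N_3\ll N_1\sim N_2$, where the shell is nearly centered at the origin and both annular constraints are essentially tautological on it; a direct computation produces only the inadequate shell area $\sim N_{\max}^{d-1}$. The missing gain $(N_{\min}/N_{\max})^{1/2}$ can be recovered by further decomposing $P_{N_1}u_1$ and $P_{N_2}u_2$ into tubes of transverse size $\sim N_{\min}$ (equivalently, angular caps of size $\sim N_{\min}$ on the $N_{\max}$-sphere), applying the Plancherel measure bound to each cap pair (for which only $O(1)$ output caps are reachable), and summing via almost-orthogonality of the resulting outputs.
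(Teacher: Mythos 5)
Your reduction to the measure bound $\sup_{\tau,\xi}|A(\tau,\xi)|\lesssim N_{\min}^{d-1}N_{\max}^{-1}L_1L_2$ and your handling of the high--high--low regime are in substance the paper's own argument: there, too, one decomposes into cubes $B_k$ of width $2N_3=2N_{\min}$ (your tubes/caps), notes that each piece of $\widehat{u_1}$ pairs with $O(1)$ pieces of $\widehat{u_2}$, bounds each pair by a Cauchy--Schwarz/measure computation in which the change of variables with Jacobian $2|\sigma_1\xi_1^{(1)}-\sigma_2\xi_2^{(1)}|\sim N_1$ plays exactly the role of your shell thickness $\max(L_1,L_2)/N_{\max}$, and closes the sum over $k$ by Cauchy--Schwarz. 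One caveat on wording: the outputs $P_{N_3}(f_{1,k}f_{2,j(k)})$ all sit at output frequency $\lesssim N_{\min}$ and are \emph{not} almost orthogonal; what closes the sum is the triangle inequality combined with Cauchy--Schwarz over the (orthogonal) input pieces. Where you genuinely differ is in the remaining regimes: the paper quotes Lemma~3.1 of \cite{Hi} for $N_3\sim N_1\gg N_2$ and uses H\"older--Bernstein--Strichartz for $N_1\sim N_2\sim N_3$, whereas you run the same shell computation uniformly, which is a cleaner and more self-contained route where it works.

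It does not quite work as written in the balanced case $N_1\sim N_2\sim N_3$, which your ``low-high'' paragraph implicitly covers. The assertion that the shell has radius $R\sim N_{\max}$, hence thickness $\sim\max(L_1,L_2)/N_{\max}$, is justified when $N_{\min}\ll N_{\max}$ (the center $\frac{\sigma_2}{\sigma_1+\sigma_2}\xi$ then lies at distance $\sim N_{\max}$ from the small annulus), but when all three frequencies are comparable the critical point $\xi_1=\frac{\sigma_2}{\sigma_1+\sigma_2}\xi$ of $\xi_1\mapsto\sigma_1|\xi_1|^2+\sigma_2|\xi-\xi_1|^2$ may lie inside $\{|\xi_1|\sim N_1,\ |\xi-\xi_1|\sim N_2\}$, and for suitable $\tau$ the level set degenerates to a ball of radius $\sim(\max L_i)^{1/2}$ rather than a thin shell. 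The conclusion survives for $d\ge2$ because the degenerate configurations still have measure at most $\min(R,N_{\max})^{d-1}\cdot\max L_i/R\le N_{\max}^{d-2}\max L_i$ for $R\gtrsim(\max L_i)^{1/2}$, and at most $\min\bigl((\max L_i)^{d/2},N_{\max}^d\bigr)\le N_{\max}^{d-2}\max L_i$ otherwise; you should add this case split. For $d=1$ the gap is not repairable --- indeed for $\sigma_1=\sigma_2$ and $\widehat{u_1},\widehat{u_2}$ supported in $[N,N+1]$ the stated inequality fails in the balanced case, so the proposition should be read with $d\ge2$ (the paper only invokes it for $d=2,3$, and its own Bernstein step likewise needs $d\ge2$).
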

\begin{proof} 
By the symmetry, we ca assume $N_1\ge N_2$. 
For the case $N_3\sim N_1\gg N_2$, the proof is same as 
the proof of Lemma\ {\rm 3.1} in \cite{Hi}. 
For the case $N_1\sim N_2\sim N_3$, we can obtain (\ref{L2be_est}) 
by using the H\"older inequality, the Bernstein inequality, and the Strichartz estimate
(\ref{Stri_est}) with $(p,q)=(4,\frac{2d}{d-1})$. 
Now, we consider the case $N_1\sim N_2\gg N_3$. 
Since $\sigma_1+\sigma_2\ne 0$, 
if $|\xi_1|\sim |\xi_2|\sim N_1$ and $|\xi_1+\xi_2|\sim N_3$ hold, then 
$|\sigma_1\xi_1-\sigma_2\xi_2|\sim N_1$. 
We assume $|\sigma_1\xi_1^{(1)}-\sigma_2\xi_2^{(1)}|\sim N_1$ 
for $\xi_i=(\xi_i^{(1)},\cdots \xi_i^{(d)})$ $(i=1,2)$. 
We divide $\R^2$ into cubes $\{B_k\}_k$ with width $2N_3$,
and decompose
\[
u_1=\sum_{k}P_{B_k}u_1, 
\]
where $\widehat{P_{B_k}u_1}=\ee_{B_k}\widehat{u_1}$. 
Let $\zeta_k\in B_k$ be a center of $B_k$. 
If $|\xi_1-\zeta_k|\le 2N_3$ and $|\xi_1+\xi_2|\le 2N_3$ hold, 
then it holds that $|\xi_2+\zeta_k|\le 4N_3$. 
Therefore, if we put $C_k$ as the cube with center $-\zeta_k$ and width $4N_3$,  
then we have
\[
\begin{split}
&\|P_{N_3}(Q_{L_1}^{\sigma_1}P_{N_1}u_{1}\cdot Q_{L_2}^{\sigma_2}P_{N_2}u_{2})\|_{L^{2}_{tx}}
\lesssim \sum_{k}\|u_{1,N_1,L_1,k}\cdot u_{2,N_2,L_2,k}\|_{L^2_{tx}}, 
\end{split}
\]
where
\[
u_{1,N_1,L_1,k}:=P_{B_k}Q_{L_1}^{\sigma_1}P_{N_1}u_{1},\ \ 
u_{2,N_2,L_2,k}:=P_{C_k}Q_{L_2}^{\sigma_2}P_{N_2}u_{2}.
\] 
We put $f_{i,k}=\F[u_{i,N_i,L_i,k}]$ $(i=1,2)$. 
By the duality argument and
\[
\begin{split}
\sum_k \|f_{1,k}\|_{L^2_{\tau \xi}}\|f_{2,k}\|_{L^2_{\tau \xi}}
&\le \left(\sum_k\|f_{1,k}\|_{L^2_{\tau \xi}}^2\right)^{\frac{1}{2}}
\left(\sum_k\|f_{2,k}\|_{L^2_{\tau \xi}}^2\right)^{\frac{1}{2}}\\
&\lesssim \|Q_{L_1}^{\sigma_1}P_{N_1}u_{1}\|_{L^2_{tx}}
\|Q_{L_2}^{\sigma_2}P_{N_2}u_{2}\|_{L^2_{tx}},
\end{split}
\]
it suffice to show that
\begin{equation}\label{BSE_pf_1}
\begin{split}
&\left|\int_{\Omega_k}f_{1,k}(\tau_1,\xi_1)f_{2,k}(\tau_2,\xi_2)f(\tau_1+\tau_2,\xi_1+\xi_2)d\tau_1d\tau_2d\xi_1d\xi_2\right|\\
&\lesssim N_{3}^{\frac{d}{2}-1}\left(\frac{N_{3}}{N_{1}}\right)^{\frac{1}{2}}L_1^{\frac{1}{2}}L_2^{\frac{1}{2}}
\|f_{1,k}\|_{L^2_{\tau \xi}}\|f_{2,k}\|_{L^2_{\tau \xi}}\|f\|_{L^2_{\tau \xi}}
\end{split}
\end{equation}
for any $f\in L^2(\R\times \R^d)$, where
\[
\Omega_k=\{(\tau_1,\tau_2,\xi_1,\xi_2)|\ |\xi_i|\sim N_i,\ |\tau_i+\sigma_i|\xi|^2|\sim L_i,\ (i=1,2),\ |\xi_1-\zeta_k|\lesssim N_3,\ |\xi_2+\zeta_k|\lesssim N_3\}.
\]
By the Cauchy-Schwartz inequality, we have
\begin{equation}\label{BSE_pf_2}
\begin{split}
&\left|\int_{\Omega_k}f_{1,k}(\tau_1,\xi_1)f_{2,k}(\tau_2,\xi_2)f(\tau_1+\tau_2,\xi_1+\xi_2)d\tau_1d\tau_2d\xi_1d\xi_2\right|\\
&\lesssim \|f_{1,k}\|_{L^2_{\tau\xi}}\|f_{2,k}\|_{L^2_{\tau\xi}}
\left(\int_{\Omega_k}|f(\tau_1+\tau_2,\xi_1+\xi_2)|^2d\tau_1d\tau_2d\xi_1d\xi_2\right)^{\frac{1}{2}}.
\end{split}
\end{equation}
By applying the variable transform $(\tau_1,\tau_2)\mapsto (\theta_1,\theta_2)$ and $(\xi_1,\xi_2)\mapsto (\mu,\nu, \eta)$ as 
\[
\begin{split}
&\theta_i=\tau_i+\sigma_i|\xi_i|^2\ \  (i=1,2),\\
&\mu =\theta_1+\theta_2-\sigma_1|\xi_1|^2-\sigma_2|\xi_2|^2,\ \nu=\xi_1+\xi_2,\ 
\eta=(\xi_2^{(2)},\cdots, \xi_2^{(d)}),
\end{split}
\]
we have
\[
\begin{split}
&\int_{\Omega_k}|f(\tau_1+\tau_2,\xi_1+\xi_2)|^2d\tau_1d\tau_2d\xi_1d\xi_2\\
&\lesssim \int_{\substack{|\theta_1|\sim L_1\\ |\theta_2|\sim L_2}}\left(\int_{|\eta +\overline{\zeta}_k|\lesssim N_3}|f(\mu,\nu)|^2
J(\xi_1,\xi_2)^{-1}d\mu d\nu d\eta \right)d\theta_1d\theta_2 ,
\end{split}
\]
where $\overline{\zeta}_k=(\zeta_k^{(2)},\cdots, \zeta_k^{(d)})$ and 
\[
J(\xi_1,\xi_2)
=\left|{\rm det}\frac{\partial (\mu ,\nu,\eta )}{\partial (\xi_1,\xi_2)}\right|
=2|\sigma_1\xi_1^{(1)}-\sigma_2\xi_2^{(1)}|\sim N_{1}.
\]
Therefore, we obtain
\begin{equation}\label{BSE_pf_3}
\int_{\Omega_k}|f(\tau_1+\tau_2,\xi_1+\xi_2)|^2d\tau_1d\tau_2d\xi_1d\xi_2
\lesssim N_3^{d-1}N_1^{-1}L_1L_2\|f\|_{L^2_{\tau\xi\eta}}.
\end{equation}
As a result, we get (\ref{BSE_pf_1}) from (\ref{BSE_pf_2}) and (\ref{BSE_pf_3}).
\end{proof}
\begin{cor}\label{L2be_2}
Let $d\in \N$, $b'\in (\frac{1}{4},\frac{1}{2})$, 
and 
$\sigma_{1}$, $\sigma_{2}\in \R \backslash \{0\}$ satisfy $\sigma_1+\sigma_2\ne 0$, 
We put $\delta =\frac{1}{2}-b'$. 
For any dyadic numbers $N_1$, $N_2$, $N_3\in 2^{\N_0}$ 
and $L_1$, $L_2\in 2^{\N_0}$, we have
\begin{equation}\label{L2be_est_2}
\begin{split}
&\|P_{N_3}(Q_{L_1}^{\sigma_1}P_{N_1}u_{1}\cdot Q_{L_2}^{\sigma_2}P_{N_2}u_{2})\|_{L^{2}_{tx}}\\
&\lesssim 
N_{\min}^{\frac{d}{2}-1+4 \delta}
\left(\frac{N_{\min}}{N_{\max}}\right)^{\frac{1}{2}- 2\delta}L_1^{b'}L_2^{b'}
\|Q_{L_1}^{\sigma_1}P_{N_1}u_{1}\|_{L^2_{tx}}\|Q_{L_2}^{\sigma_2}P_{N_2}u_{2}\|_{L^2_{tx}}.  
\end{split}
\end{equation}
\end{cor}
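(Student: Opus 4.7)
I would deduce Corollary~\ref{L2be_2} by interpolating Proposition~\ref{L2be} against the following ``trivial'' bilinear estimate:
\[
\|P_{N_3}(Q_{L_1}^{\sigma_1}P_{N_1}u_1\cdot Q_{L_2}^{\sigma_2}P_{N_2}u_2)\|_{L^2_{tx}} \lec N_{\min}^{d/2}(L_1 L_2)^{1/4}\|Q_{L_1}^{\sigma_1}P_{N_1}u_1\|_{L^2_{tx}}\|Q_{L_2}^{\sigma_2}P_{N_2}u_2\|_{L^2_{tx}}. \quad (\ast)
\]
Once $(\ast)$ is established, both it and Proposition~\ref{L2be} bound the same left-hand side, so for any $\theta\in[0,1]$ their geometric mean $M_1^{1-\theta}M_2^\theta$ is also a bound (via $\min(M_1,M_2)\le M_1^{1-\theta}M_2^\theta$). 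Choosing $\theta=4\delta$ with $M_1 = N_{\min}^{d/2-1}(N_{\min}/N_{\max})^{1/2}(L_1 L_2)^{1/2}$ and $M_2 = N_{\min}^{d/2}(L_1 L_2)^{1/4}$, a direct computation of exponents produces exactly the RHS of Corollary~\ref{L2be_2}: the $L$-exponent becomes $1/2 - \delta = b'$, the $N_{\min}$-exponent becomes $d/2 - 1 + 4\delta$, and the $(N_{\min}/N_{\max})$-exponent becomes $1/2 - 2\delta$.

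For $(\ast)$ I would split by scale configuration. If $N_1\sim N_2\sim N_3$, I would use H\"older $\|u_1 u_2\|_{L^2_{tx}} \le \|u_1\|_{L^\infty_{tx}}\|u_2\|_{L^2_{tx}}$ together with the Bernstein-type bound $\|Q_L^\sigma P_N u\|_{L^\infty_{tx}} \lec (LN^d)^{1/2}\|u\|_{L^2_{tx}}$ (from Cauchy-Schwarz on the Fourier side, since $\tilde u$ is supported on a set of volume $\lec LN^d$), symmetrize in $u_1,u_2$, and apply $\min(L_1,L_2)^{1/2}\le (L_1L_2)^{1/4}$. If $N_{\max}\gg N_{\min}$, I would reuse the cube decomposition from the proof of Proposition~\ref{L2be}: partition the higher-frequency factor into pieces supported in cubes $B_k$ of side $\sim N_{\min}$, pair each with the compatible piece of the other factor, and apply Cauchy-Schwarz on the Fourier convolution to get $\|P_{N_3}(u_{1,k}u_{2,k})\|_{L^2_{tx}}^2 \le V_k\|u_{1,k}\|_{L^2_{tx}}^2\|u_{2,k}\|_{L^2_{tx}}^2$, where $V_k$ is the volume of the $(\tau_1,\xi_1)$-support. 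Unlike Proposition~\ref{L2be}, which uses the sharp resonance estimate $V_k\lec L_1 L_2 N_{\min}^{d-1}/N_{\max}$, I would apply only the crude bound $V_k\lec\min(L_1,L_2)N_{\min}^d$ (from the cube restriction on $\xi_1$ and one modulation constraint on $\tau_1$). Summation over $k$ by Cauchy-Schwarz and the orthogonality of the cube projections recovers $(\ast)$ in this regime.

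The main obstacle is making the exponent bookkeeping succeed: the trivial bound $(\ast)$ must carry the factor $N_{\min}^{d/2}$ and not a larger quantity like $N_j^{d/2}$ or $(N_1 N_2)^{d/4}$. In the resonant regime $N_1\sim N_2\gg N_3$, a plain Bernstein--H\"older argument produces only $N_1^{d/2}=N_{\max}^{d/2}$, which is too weak and would spoil the $(N_{\min}/N_{\max})^{1/2-2\delta}$ factor after interpolation. The cube decomposition is essential precisely because it recovers $N_{\min}^{d/2}$ in this regime by using the cube side length $N_{\min}$ rather than the individual frequency size $N_1$.
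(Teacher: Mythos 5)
Your proposal is correct and follows essentially the same route as the paper: the paper also obtains Corollary~\ref{L2be_2} by interpolating Proposition~\ref{L2be} with exactly your estimate $(\ast)$ (its \eqref{L2be_est_3}), proved after the same reduction to cubes of side $\sim N_{\min}$. The only (immaterial) difference is in how $(\ast)$ is finished: the paper uniformly uses the symmetric H\"older bound $\|u_1u_2\|_{L^2}\le\prod_i\|u_i\|_{L^2}^{1/2}\|u_i\|_{L^\infty}^{1/2}$ with Bernstein and the $(p,q)=(\infty,2)$ Strichartz estimate, whereas you use an $L^\infty\times L^2$ H\"older in the balanced case and a Fourier-side Cauchy--Schwarz volume count in the unbalanced case; both yield $N_{\min}^{d/2}(L_1L_2)^{1/4}$, and your exponent bookkeeping for the interpolation is accurate.
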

\begin{proof}
The desired estimate is obtained by the interpolation between (\ref{L2be_est}) and the following bilinear estimate:
\begin{equation}\label{L2be_est_3}
\begin{split}
&\|P_{N_3}(Q_{L_1}^{\sigma_1}P_{N_1}u_{1}\cdot Q_{L_2}^{\sigma_2}P_{N_2}u_{2})\|_{L^{2}}\lesssim N_{\min}^{\frac{d}{2}}L_1^{\frac{1}{4}}L_2^{\frac{1}{4}}
\|Q_{L_1}^{\sigma_1}P_{N_1}u_{1}\|_{L^2}
\|Q_{L_2}^{\sigma_1}P_{N_2}u_{2}\|_{L^2}.
\end{split}
\end{equation}
Therefore, we only need to show (\ref{L2be_est_3}). 
By the same argument as in the proof of Proposition \ref{L2be}, we may assume that 
$\supp \F_{tx} u_1$ and $\supp \F_{tx} u_2$ are contained in the cubes $B_k$ and $B_{j(k)}$, respectively. 
Here the cubes $\{ B_k \}_k$ denote the decomposition of $\R^2$ with width $2 N_{\min}$. 
By the H\"older inequality, the Bernstein inequality, 
and the Strichartz estimate (\ref{Stri_est_2}) with 
$(p,q)=(\infty ,2)$, we have
\begin{equation*}
\begin{split}
&\|P_{N_3}(Q_{L_1}^{\sigma_1}P_{N_1}u_{1}\cdot Q_{L_2}^{\sigma_2}P_{N_2}u_{2})\|_{L^{2}}\\
&\lesssim \|Q_{L_1}^{\sigma_1}P_{N_1}u_{1}\|_{L^2}^{\frac{1}{2}}
\|Q_{L_1}^{\sigma_1}P_{N_1}u_{1}\|_{L^{\infty}}^{\frac{1}{2}}
\|Q_{L_2}^{\sigma_1}P_{N_2}u_{2}\|_{L^2}^{\frac{1}{2}}
\|Q_{L_2}^{\sigma_1}P_{N_2}u_{2}\|_{L^{\infty}}^{\frac{1}{2}}\\
&\lesssim N_{\min}^{\frac{d}{2}} L_1^{\frac{1}{4}}L_2^{\frac{1}{4}}
\|Q_{L_1}^{\sigma_1}P_{N_1}u_{1}\|_{L^2}
\|Q_{L_2}^{\sigma_1}P_{N_2}u_{2}\|_{L^2}
\end{split}
\end{equation*}
which completes the proof of \eqref{L2be_est_3}.
\end{proof}
%
%
%
%
\subsection{The estimates for low modulation, $2$D}
In this subsection, we assume that $L_{\textnormal{max}} \ll N_{\max}^2$ and $d=2$. 
In this case, we cannot recover a derivative loss by using $L_{\textnormal{max}} \gtrsim N_{\max}^2$. 
Therefore, the strategy for the case $L_{\textnormal{max}} \gtrsim N_{\max}^2$ is no longer available. 
However, thanks to $\kappa \not= 0$, the following relation holds. 
\begin{lemm}\label{modul_est_2}
Let $s \in \N$. We assume that $\sigma_1$, $\sigma_2$, $\sigma_3 \in \R \setminus \{0 \}$ satisfy 
$(\sigma_1 + \sigma_2) (\sigma_2 + \sigma_3) (\sigma_{3}+\sigma_{1})\neq 0$ and $(\tau_{1},\xi_{1})$, $(\tau_{2}, \xi_{2})$, $(\tau_{3}, \xi_{3})\in \R\times \R^{d}$ satisfy $\tau_{1}+\tau_{2}+\tau_{3}=0$, $\xi_{1}+\xi_{2}+\xi_{3}=0$. 
If $\displaystyle \max_{1\leq j\leq 3}|\tau_{j}+\sigma_{j}|\xi_{j}|^{2}|
\ll \max_{1\leq j\leq 3}|\xi_{j}|^{2}$ then we have
\begin{equation*}
|\xi_1| \sim |\xi_2| \sim |\xi_3|.
\end{equation*}
\end{lemm}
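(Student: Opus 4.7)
The plan is to reduce the claim to a lower bound on the resonance function
$$H(\xi_1,\xi_2,\xi_3) := \sigma_1|\xi_1|^2+\sigma_2|\xi_2|^2+\sigma_3|\xi_3|^2.$$
Because $\tau_1+\tau_2+\tau_3=0$, one has $H = \sum_{j=1}^3(\tau_j+\sigma_j|\xi_j|^2)$, so the hypothesis $\max_j|\tau_j+\sigma_j|\xi_j|^2|\ll\max_j|\xi_j|^2$ forces
$$|H|\le 3\max_{1\le j\le 3}|\tau_j+\sigma_j|\xi_j|^2|\ll\max_{1\le j\le 3}|\xi_j|^2.$$
It therefore suffices to prove the contrapositive of the lemma: if $|\xi_1|,|\xi_2|,|\xi_3|$ are not all comparable, then $|H|\gtrsim\max_j|\xi_j|^2$.

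Suppose the three moduli are not all comparable, and let $|\xi_k|$ be the (strictly) smallest. Write $\{i,j,k\}=\{1,2,3\}$. The relation $\xi_i+\xi_j+\xi_k=0$ together with $|\xi_k|\ll\max_\ell|\xi_\ell|$ forces $|\xi_i|\sim|\xi_j|\sim\max_\ell|\xi_\ell|$. Substituting $\xi_j=-\xi_i-\xi_k$ and expanding $|\xi_j|^2=|\xi_i|^2+2\,\xi_i\!\cdot\!\xi_k+|\xi_k|^2$ in the definition of $H$ yields the key algebraic identity
$$H = (\sigma_i+\sigma_j)|\xi_i|^2 + 2\sigma_j\,\xi_i\!\cdot\!\xi_k + (\sigma_j+\sigma_k)|\xi_k|^2.$$
By the Cauchy--Schwarz inequality, the last two terms are bounded by $C(|\xi_i||\xi_k|+|\xi_k|^2)$, which is $o(|\xi_i|^2)$ because $|\xi_k|\ll|\xi_i|$. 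By hypothesis $(\sigma_i+\sigma_j)\ne 0$, so the first term has size $\sim|\xi_i|^2=\max_\ell|\xi_\ell|^2$, and hence $|H|\sim\max_\ell|\xi_\ell|^2$, contradicting the displayed bound.

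The three possible choices of the small index $k\in\{1,2,3\}$ require the three non\nobreakdash-resonance conditions $(\sigma_1+\sigma_2)\ne0$, $(\sigma_2+\sigma_3)\ne0$, $(\sigma_3+\sigma_1)\ne0$ respectively, all of which are provided by the hypothesis $(\sigma_1+\sigma_2)(\sigma_2+\sigma_3)(\sigma_3+\sigma_1)\ne0$. There is no serious analytic difficulty here; the main point is the algebraic identity displayed above, and the only care required is bookkeeping to make sure that for each of the three cases the appropriate non\nobreakdash-resonance coefficient lies in front of the leading $|\xi_i|^2$ term.
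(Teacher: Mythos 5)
Your proof is correct and takes essentially the same route as the paper: the paper disposes of this lemma by noting it is the contrapositive of Lemma \ref{modul_est} (Lemma 4.1 in \cite{Hi}) and omits the details, and what you have written is precisely the standard resonance-identity argument proving that contrapositive, namely $H=\sum_j(\tau_j+\sigma_j|\xi_j|^2)=(\sigma_i+\sigma_j)|\xi_i|^2+2\sigma_j\,\xi_i\cdot\xi_k+(\sigma_j+\sigma_k)|\xi_k|^2$ with the last two terms negligible. (The pairing in your final paragraph of the small index $k$ with the condition $\sigma_i+\sigma_j\neq 0$ for $\{i,j\}=\{1,2,3\}\setminus\{k\}$ is permuted, but since all three pairwise sums are assumed nonzero this is immaterial.)
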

Since the above lemma is the contrapositive of the following lemma which was utilized in \cite{Hi}, we omit the proof. 
\begin{lemm}[Lemma\ 4.1\ in \cite{Hi}]\label{modul_est}
Let $d\in \N$. We assume that $\sigma_{1}$, $\sigma_{2}$, $\sigma_{3} \in \R \backslash \{0\}$ satisfy $(\sigma_{1}+\sigma_{2})(\sigma_{2}+\sigma_{3})(\sigma_{3}+\sigma_{1})\neq 0$ and $(\tau_{1},\xi_{1})$, $(\tau_{2}, \xi_{2})$, $(\tau_{3}, \xi_{3})\in \R\times \R^{d}$ satisfy $\tau_{1}+\tau_{2}+\tau_{3}=0$, $\xi_{1}+\xi_{2}+\xi_{3}=0$.  
If there exist $1\leq i,j\leq 3$ such that $|\xi_{i}|\ll |\xi_{j}|$, then we have
\begin{equation}\label{modulation_est}
\max_{1\leq j\leq 3}|\tau_{j}+\sigma_{j}|\xi_{j}|^{2}|
\gtrsim \max_{1\leq j\leq 3}|\xi_{j}|^{2}. 
\end{equation}
\end{lemm}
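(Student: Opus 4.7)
The plan is to prove the lemma directly from the resonance identity together with the zero-sum frequency constraint, rather than invoking the earlier Lemma~\ref{modul_est}. After a harmless relabeling I would assume $|\xi_1| \le |\xi_2| \le |\xi_3|$, so that $\max_j |\xi_j| = |\xi_3|$. The triangle inequality applied to $\xi_3 = -(\xi_1 + \xi_2)$ gives $|\xi_3| \le |\xi_1|+|\xi_2| \le 2|\xi_2|$, hence $|\xi_2| \sim |\xi_3|$ comes for free. The task therefore reduces to showing $|\xi_1| \sim |\xi_3|$, which I would establish by contradiction, assuming $|\xi_1| \ll |\xi_3|$.

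Writing $h_j := \tau_j + \sigma_j |\xi_j|^2$ for the modulations and using $\tau_1 + \tau_2 + \tau_3 = 0$, one has
\begin{equation*}
h_1 + h_2 + h_3 \;=\; \sigma_1 |\xi_1|^2 + \sigma_2 |\xi_2|^2 + \sigma_3 |\xi_3|^2.
\end{equation*}
Eliminating the middle frequency through $\xi_2 = -(\xi_1 + \xi_3)$ and expanding $|\xi_2|^2 = |\xi_1|^2 + 2\,\xi_1\cdot\xi_3 + |\xi_3|^2$, I would rearrange the above to obtain the key identity
\begin{equation*}
(\sigma_1 + \sigma_2)\,|\xi_1|^2 \;+\; 2\sigma_2\,\xi_1\cdot\xi_3 \;+\; (\sigma_2 + \sigma_3)\,|\xi_3|^2 \;=\; h_1 + h_2 + h_3.
\end{equation*}

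To close the argument, under the contradiction hypothesis $|\xi_1| \ll |\xi_3|$ the term $(\sigma_1+\sigma_2)|\xi_1|^2$ is $o(|\xi_3|^2)$ and the cross term, bounded in modulus by $2|\sigma_2|\,|\xi_1|\,|\xi_3|$, is likewise $o(|\xi_3|^2)$. Since the assumption $(\sigma_1+\sigma_2)(\sigma_2+\sigma_3)(\sigma_3+\sigma_1) \ne 0$ guarantees in particular $\sigma_2 + \sigma_3 \ne 0$, the left-hand side of the key identity is of size $\sim |\xi_3|^2$, forcing $\max_j |h_j| \gtrsim |\xi_3|^2$. This contradicts the hypothesis $\max_j |h_j| \ll \max_j |\xi_j|^2 = |\xi_3|^2$, so $|\xi_1| \sim |\xi_3|$, and combined with the earlier step we conclude $|\xi_1| \sim |\xi_2| \sim |\xi_3|$.

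The one point that warrants a brief check is that the initial reordering of the indices does not destroy the nondegeneracy hypothesis. Because $(\sigma_1+\sigma_2)(\sigma_2+\sigma_3)(\sigma_3+\sigma_1) \ne 0$ is symmetric under permutations, every pair-sum $\sigma_i + \sigma_j$ is nonzero; in particular the pair-sum multiplying $|\xi_3|^2$ in the key identity is nonzero no matter which original label is relabeled as $3$. No delicate cancellation can therefore occur on the left-hand side, and this is the only place where the full strength of the triple-product hypothesis enters.
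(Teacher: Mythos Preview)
Your computation is correct, but you have written a proof of the contrapositive (what the paper records as Lemma~\ref{modul_est_2}) rather than of Lemma~\ref{modul_est} as stated: you take $\max_j|h_j|\ll\max_j|\xi_j|^2$ as a standing hypothesis and conclude $|\xi_1|\sim|\xi_2|\sim|\xi_3|$, and indeed your opening line ``rather than invoking the earlier Lemma~\ref{modul_est}'' shows you had the other lemma in mind. Since the two are contrapositives this is harmless, and in fact the paper itself does not prove Lemma~\ref{modul_est} either---it simply cites \cite{Hi} and then derives Lemma~\ref{modul_est_2} by contraposition---so there is no ``paper's own proof'' to compare against beyond that remark.

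The substance of your argument, namely the resonance identity
\[
h_1+h_2+h_3=(\sigma_1+\sigma_2)|\xi_1|^2+2\sigma_2\,\xi_1\cdot\xi_3+(\sigma_2+\sigma_3)|\xi_3|^2
\]
together with the observation that the first two terms are $o(|\xi_3|^2)$ when $|\xi_1|\ll|\xi_3|$ while $\sigma_2+\sigma_3\neq 0$ makes the third term dominant, is exactly the standard computation behind Lemma~4.1 of \cite{Hi}. To present it as a direct proof of Lemma~\ref{modul_est}, just drop the contradiction wrapper: start from the hypothesis that some $|\xi_i|\ll|\xi_j|$, relabel so $|\xi_1|\le|\xi_2|\le|\xi_3|$ (hence $|\xi_1|\ll|\xi_3|$), and your identity gives $\max_j|h_j|\ge\tfrac13|h_1+h_2+h_3|\gtrsim|\xi_3|^2=\max_j|\xi_j|^2$ immediately. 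Your final paragraph checking that the permutation-symmetric hypothesis $(\sigma_1+\sigma_2)(\sigma_2+\sigma_3)(\sigma_3+\sigma_1)\neq 0$ survives relabeling is a nice touch and is indeed the only place the full hypothesis is used.
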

Lemma \ref{modul_est_2} suggests that if  $\displaystyle \max_{1\leq j\leq 3}|\tau_{j}+\sigma_{j}|\xi_{j}|^{2}|
\ll \max_{1\leq j\leq 3}|\xi_{j}|^{2}$ then we can assume
\begin{equation}
\max_{1 \leq j\leq 3} |\tau_{j}+\sigma_{j}|\xi_{j}|^{2}| \ll  
\min_{1\leq j\leq 3} |\xi_j|^2.
\end{equation}
We first introduce the angular frequency localization operators which were utilized in \cite{BHHT09}.
\begin{defn}[\cite{BHHT09}]
We define the angular decomposition of $\R^3$ in frequency.
We define a partition of unity in $\R$,
\begin{equation*}
1 = \sum_{j \in \Z} \omega_j, \qquad \omega_j (s) = \psi(s-j) \left( \sum_{k \in \Z} \psi (s-k) \right)^{-1}. 
\end{equation*}
For a dyadic number $A \geq 64$, we also define a partition of unity on the unit circle,
\begin{equation*}
1 = \sum_{j =0}^{A-1} \omega_j^A, \qquad \omega_j^A (\theta) = 
\omega_j \left( \frac{A\theta}{\pi} \right) + \omega_{j-A} \left( \frac{A\theta}{\pi} \right).
\end{equation*}
We observe that $\omega_j^A$ is supported in 
\begin{equation*}
\Theta_j^A = \left[\frac{\pi}{A} \, (j-2), \ \frac{\pi}{A} \, (j+2) \right] 
\cup \left[-\pi + \frac{\pi}{A} \, (j-2), \ - \pi +\frac{\pi}{A} \, (j+2) \right].
\end{equation*}
We now define the angular frequency localization operators $R_j^A$,
\begin{equation*}
\F_x (R_j^A f)(\xi) = \omega_j^A(\theta) \F_x f(\xi), \qquad \textnormal{where} \ \xi = |\xi| 
(\cos \theta, \sin \theta).
\end{equation*}
For any function $u  : \, \R \, \times \, \R^2 \, \to \C$, $(t,x) \mapsto u(t,x)$ we set 
$(R_j^A u ) (t, x) = (R_j^Au( t, \cdot)) (x)$. These operators localize function in frequency to the sets
\begin{equation*}
{\mathfrak{D}}_j^A = \{ (\tau, |\xi| \cos \theta, |\xi| \sin \theta) \in \R \times \R^2 
\, | \, \theta \in \Theta_j^A  \} .
\end{equation*}
Immediately, we can see
\begin{equation*}
u = \sum_{j=0}^{A-1} R_j^A u.
\end{equation*}
\end{defn}
Now we introduce the necessary bilinear estimates for $2$D.
\begin{thm}\label{thm-0.3}
Let $\displaystyle L_{\max} := \max_{1\leq j\leq 3} (L_1, L_2, L_3) \ll |\theta| N_{\min}^2$, 
$A \geq 64$ and $|j_1 - j_2| \lesssim 1$. 
Then the following estimates holds:
\begin{align}
\begin{split}
\|Q_{L_3}^{-\sigma_3} P_{N_3}(R_{j_1}^A Q_{L_1}^{\sigma_1}P_{N_1}u_{1}\cdot 
R_{j_2}^A Q_{L_2}^{\sigma_2}P_{N_2}u_{2})\|_{L^{2}_{tx}} & \\
\lesssim A^{-\frac{1}{2}} L_1^{\frac{1}{2}}L_2^{\frac{1}{2}} 
\|R_{j_1}^A Q_{L_1}^{\sigma_1}P_{N_1}u_{1}\|_{L^2_{tx}} &  \|R_{j_2}^A Q_{L_2}^{\sigma_2}P_{N_2}u_{2}\|_{L^2_{tx}},
\end{split}\label{bilinear-12}\\
\begin{split}
\|R_{j_1}^A Q_{L_1}^{-\sigma_1} P_{N_1}(R_{j_2}^A Q_{L_2}^{\sigma_2}P_{N_2}u_{2}\cdot 
 Q_{L_3}^{\sigma_3}P_{N_3}u_{3})\|_{L^{2}_{tx}} & \\
\lesssim A^{-\frac{1}{2}} L_2^{\frac{1}{2}}L_3^{\frac{1}{2}} 
\|R_{j_2}^A Q_{L_2}^{\sigma_2}P_{N_2}u_{2}\|_{L^2_{tx}} &  \|Q_{L_3}^{\sigma_3}P_{N_3}u_{3}\|_{L^2_{tx}},
\end{split}\label{bilinear-23}\\
\begin{split}
\|R_{j_2}^A Q_{L_2}^{-\sigma_2} P_{N_2}( Q_{L_3}^{\sigma_3}P_{N_3}u_{3}\cdot 
 R_{j_1}^A Q_{L_1}^{\sigma_1}P_{N_1}u_{1})\|_{L^{2}_{tx}} & \\
\lesssim A^{-\frac{1}{2}} L_3^{\frac{1}{2}}L_1^{\frac{1}{2}} \|Q_{L_3}^{\sigma_3}P_{N_3}u_{3}\|_{L^2_{tx}} & 
\|R_{j_1}^A Q_{L_1}^{\sigma_1}P_{N_1}u_{1}\|_{L^2_{tx}}.
\end{split}\label{bilinear-31}
\end{align}
\end{thm}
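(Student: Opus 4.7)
My plan is to prove (\ref{bilinear-12}) in detail; the estimates (\ref{bilinear-23}) and (\ref{bilinear-31}) then follow by a Plancherel/complex-conjugation duality, which exchanges a projection $Q_{L_i}^{-\sigma_i}$ between output and input while transferring the angular cutoff $R_{j_i}^A$ along with it (up to a sign flip on $\sigma_i$). The first observation is that the hypothesis $L_{\max} \ll |\theta| N_{\min}^2$, combined with Lemma \ref{modul_est_2}, forces $|\xi_1| \sim |\xi_2| \sim |\xi_3|$ on the support of the trilinear form, so one may assume $N_1 \sim N_2 \sim N_3 =: N$. By Plancherel and duality, (\ref{bilinear-12}) reduces to
\[
\int_{\Omega} f_1(\tau_1, \xi_1)\, f_2(\tau_2, \xi_2)\, g(\tau_1+\tau_2, \xi_1+\xi_2) \, d\tau_1 d\tau_2 d\xi_1 d\xi_2 \lesssim A^{-\frac12}L_1^{\frac12}L_2^{\frac12}\|f_1\|_{L^2} \|f_2\|_{L^2} \|g\|_{L^2},
\]
with $f_1, f_2, g \ge 0$ supported in the relevant frequency, modulation, and angular strips and $\Omega$ their joint constraint set. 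Cauchy--Schwarz in $(\tau_1, \tau_2, \xi_1, \xi_2)$ splits off $f_1 f_2$, reducing matters to the bound $\int_{\Omega} |g(\tau_1+\tau_2, \xi_1+\xi_2)|^2 \lesssim A^{-1} L_1 L_2 \|g\|_{L^2}^2$.

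To estimate this, I reprise the change of variables from the proof of Proposition \ref{L2be}. After rotating $\R^2$ so the common angular sector aligns with the $e_1$-axis, the constraints $\xi_i \in \Theta_{j_i}^A$ with $|j_1 - j_2| \lesssim 1$ yield $\xi_i^{(1)} \sim N$ and $|\xi_i^{(2)}| \lesssim N/A$ for $i = 1, 2$. I then apply
\[
(\tau_1, \tau_2, \xi_1, \xi_2) \longmapsto (\theta_1, \theta_2, \mu, \nu, \eta), \quad \theta_i = \tau_i + \sigma_i |\xi_i|^2,\ \mu = \tau_1 + \tau_2,\ \nu = \xi_1+\xi_2,\ \eta = \xi_2^{(2)},
\]
whose Jacobian is $|J| = 2|\sigma_1 \xi_1^{(1)} - \sigma_2 \xi_2^{(1)}|$. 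The integrations in $\theta_1, \theta_2$ contribute $L_1, L_2$; the new feature compared to Proposition \ref{L2be} is that the $\eta$-integration now runs only over a set of length $O(N/A)$ rather than $O(N)$, which, provided $|J| \gtrsim N$ on $\Omega$, yields the extra $A^{-1}$ gain needed.

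The main obstacle is to verify the uniform lower bound $|J| \gtrsim N$ on $\Omega$. In the rotated coordinates, the angular restriction gives $\xi_i^{(1)} = |\xi_i|(1 + O(A^{-2}))$ with a common sign, so $|J|$ is essentially $|\sigma_1 |\xi_1| - \sigma_2 |\xi_2||$, which can a priori degenerate along the ray $|\xi_1|/|\xi_2| = \sigma_2/\sigma_1$. However, on $\Omega$ the resonance identity reads
\[
(\sigma_1+\sigma_3)|\xi_1|^2 + (\sigma_2+\sigma_3)|\xi_2|^2 + 2\sigma_3\, \xi_1 \cdot \xi_2 = O(L_{\max}) \ll N^2,
\]
and substituting the degenerate ratio together with $\xi_1 \cdot \xi_2 = |\xi_1| |\xi_2|(1 + O(A^{-2}))$ reduces the left-hand side to $\sigma_1^{-2}(\sigma_1+\sigma_2)(\sigma_1\sigma_2 + \sigma_2\sigma_3 + \sigma_3\sigma_1)|\xi_2|^2$ modulo lower-order terms. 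By the standing hypotheses $(\sigma_1+\sigma_2)(\sigma_2+\sigma_3)(\sigma_3+\sigma_1) \ne 0$ and $\sigma_1\sigma_2 + \sigma_2\sigma_3 + \sigma_3\sigma_1 \ne 0$ (inherited from Proposition \ref{key_be}), this is of size $\sim N^2$, contradicting the bound $\ll N^2$. A quantitative version of this dichotomy then yields $|J| \gtrsim N$ throughout $\Omega$ (with constant depending only on the $\sigma_i$), closing the argument.
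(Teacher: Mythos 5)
Your proposal is correct in substance and follows the paper's overall architecture (reduction to \eqref{bilinear-12} via $N_1\sim N_2\sim N_3$ and duality, then Cauchy--Schwarz to reduce to a measure/Jacobian bound with the $A^{-1}$ gain coming from the angular localization and the $L_1L_2$ from the modulation cutoffs), but you execute the final measure estimate differently. The paper works in polar coordinates $(|\xi_1|,\theta_1)$ and uses the completed-square identity
$(\tau_1+\sigma_1|\xi_1|^2)+(\tau-\tau_1+\sigma_2|\xi-\xi_1|^2)=\tau-\sigma_3|\xi|^2+\frac{((\sigma_1+\sigma_2)|\xi_1|-\sigma_2\cos\angle(\xi,\xi_1)|\xi|)^2-(|\theta|-\sigma_2^2\sin^2\angle(\xi,\xi_1))|\xi|^2}{\sigma_1+\sigma_2}$,
which pins the squared quantity to the value $|\theta||\xi|^2+\mathcal{O}(L_{\max})\sim |\theta|N^2$ and hence confines $|\xi_1|$ to a set of radial measure $\mathcal{O}(L_{\max}/N)$ for each fixed angle; no separate Jacobian lemma is needed. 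You instead reuse the Cartesian change of variables of Proposition \ref{L2be} and must supply the lower bound $|J|=2|\sigma_1\xi_1^{(1)}-\sigma_2\xi_2^{(1)}|\gtrsim N$, which you verify by evaluating the resonance quadratic $q(r)=(\sigma_1+\sigma_3)r^2+2\sigma_3 r+(\sigma_2+\sigma_3)$ at the degenerate ratio $r=\sigma_2/\sigma_1$ and getting $\sigma_1^{-2}(\sigma_1+\sigma_2)(\sigma_1\sigma_2+\sigma_2\sigma_3+\sigma_3\sigma_1)\neq 0$; your algebra here is right, and in fact your quantity $|J|$ is (up to an $\mathcal{O}(NA^{-1})$ error) the radial derivative $2|(\sigma_1+\sigma_2)|\xi_1|-\sigma_2\cos\angle(\xi,\xi_1)|\xi||$ that the paper's identity controls directly, so the two non-degeneracy statements are equivalent and rest on the same hypotheses $\sigma_1+\sigma_2\neq 0$ and $\sigma_1\sigma_2+\sigma_2\sigma_3+\sigma_3\sigma_1=-|\theta|<0$. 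Two small points to patch: (i) $\Theta_{j}^A$ is a union of two \emph{antipodal} arcs, so after rotation you may have $\xi_2^{(1)}\sim -N$ rather than $+N$; the degenerate ratio then becomes $-\sigma_2/\sigma_1$, but the value of $q$ there is identical, so your argument extends verbatim once you state this case. (ii) Your one-line dispatch of \eqref{bilinear-23} and \eqref{bilinear-31} should make explicit that the missing angular cutoff on $u_3$ is recovered for free because $N_1\sim N_2\sim N_3$ forces $\xi_3=-(\xi_1+\xi_2)$ into a sector $\Theta_j^A$ with $|j-j_1|\lesssim 1$, which is exactly how the paper reduces all three estimates to the first.
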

\begin{proof}
If $A \sim 1$, Proposition \ref{L2be} implies \eqref{bilinear-12}-\eqref{bilinear-31}. 
Then we assume that $A$ is sufficiently large. 
Also, we can assume $N_1 \sim N_2 \sim N_3$ from Lemma \ref{modul_est_2}. 
Thus it suffices to show \eqref{bilinear-12}.
Indeed, thanks to $N_1 \sim N_2 \sim N_3$, we may replace $u_3$ in \eqref{bilinear-23} and \eqref{bilinear-31} with 
$R_j^A u_3$ where $j$ satisfies $|j-j_1|$, $|j-j_2| \lesssim 1$. 
Therefore, here we prove only \eqref{bilinear-12}. 

Let $f_{i}=\F_{tx} [R_{j_i}^A Q_{L_i}^{\sigma_i}P_{N_i}u_{i}]$ $(i=1,2)$. 
By Plancherel's theorem, we may rewrite \eqref{bilinear-12} as
\begin{equation}
\begin{split}
 \left\| \psi_{L_3} (\tau - \sigma_3 |\xi|^2) \psi_{N_3}(\xi) \int 
f_1 (\tau_1,\xi_1) f_2 (\tau-\tau_1, \xi-\xi_1)d\tau_1d\xi_1 \right\|_{L_{\tau \xi}^2} & \\ 
\lesssim 
A^{-\frac{1}{2}} L_1^{\frac{1}{2}}L_2^{\frac{1}{2}} 
\|f_1\|_{L^2_{\tau \xi}} & \|f_2 \|_{L^2_{\tau \xi}}.
\end{split}\label{bilinear-15}
\end{equation}
Let $\psi_{N_3, L_3}^{\sigma_3}(\ta,\xi) := \psi_{L_3} (\tau - \sigma_3 |\xi|^2) \psi_{N_3}(\xi)$.
We calculate as
\begin{align*}
&  \left\|\psi_{N_3, L_3}^{\sigma_3}(\ta,\xi) \int 
f_1 (\tau_1, \xi_1) f_2 (\tau-\tau_1, \xi-\xi_1)d\tau_1d\xi_1 \right\|_{L_{\tau \xi}^2}\\
\lesssim & \left\| \psi_{N_3, L_3}^{\sigma_3}(\ta,\xi)  \left( \int |f_1|^2 (\tau_1, \xi_1) 
| f_2|^2 (\tau - \tau_1, \xi-\xi_1) d\tau_1d\xi_1 \right)^{1/2} (E(\tau, \xi))^{1/2}  \right\|_{L_{\tau \xi}^2}\\
\lesssim & \sup_{(\tau,\xi) \in \supp \psi_{N_3, L_3}^{\sigma_3}} |E(\tau, \xi)|^{1/2} \| |f_1|^2 * |f_2|^2 \|_{L_{\tau \xi}^1}^{1/2}\\
\lesssim & \sup_{(\tau,\xi) \in \supp \psi_{N_3, L_3}^{\sigma_3}} |E(\tau, \xi)|^{1/2} \|f_1 \|_{ L_{\tau \xi}^2} \| f_2 \|_{ L_{\tau \xi}^2}.
\end{align*}
Then it suffices to prove
\begin{equation}
 \sup_{(\tau,\xi) \in \supp \psi_{N_3, L_3}^{\sigma_3}} |E(\ta, \xi) | \lesssim A^{-1} L_1 L_2,\label{b0.1}
\end{equation}
where
\begin{equation*} E(\ta, \xi) 
 := \left\{ (\ta_1, \xi_1)  \in  {\mathfrak{D}}_{j_1}^{A} \ \left| \ 
\begin{aligned} & \LR{\ta_1 + \sigma_1 |\xi_1|^2} \sim L_1, \ \LR{\ta - \ta_1 + \sigma_2 |\xi - \xi_1|^2} 
\sim L_2, \\ 
& (\ta-\ta_1, \xi-\xi_1) \in {\mathfrak{D}}_{j_2}^{A}.
 \end{aligned} \right.
\right\}
\end{equation*}
with $ |j_1 - j_2| \lesssim 1$. 
From $\LR{\ta_1 + \sigma_1 |\xi_1|^2} \sim L_1$ and $\LR{\ta - \ta_1 + \sigma_2 |\xi - \xi_1|^2} 
\sim L_2$, for fixed $\xi_1$, 
\begin{equation}
| \{ \ta_1 \ | \ (\ta_1, \xi_1) \in E(\ta, \xi) \} | \lesssim 
\min (L_1, L_2).\label{ele1-lem-a-bilinear}
\end{equation}
Let $\theta_1$ be defined as $\xi_1 := (|\xi_1| \cos \theta_1, |\xi_1| \sin \theta_1)$. 
It follows from
\begin{align*}
& (\tau_1 + \sigma_1 |\xi_1|^2 ) + (\tau - \tau_1 + \sigma_2 |\xi - \xi_1|^2)\\
= & (\tau + \sigma_1 |\xi_1|^2 + \sigma_2 |\xi - \xi_1|^2) \\
= & \tau -\sigma_3 |\xi|^2 + \frac{ 
((\sigma_1 + \sigma_2) |\xi_1| - \sigma_2 \cos{\angle(\xi, \xi_1)} |\xi|)^2 - 
(|\theta| - \sigma_2^2 \sin^2{\angle(\xi, \xi_1)}) |\xi|^2 }{\sigma_1 + \sigma_2} 
\end{align*}
that
\begin{equation}
\begin{split}
& \bigl( (\sigma_1 + \sigma_2) |\xi_1| - \sigma_2 \cos{\angle(\xi, \xi_1)} |\xi| \bigr)^2 \\
= &  -(\sigma_1 + \sigma_2)(\ta -\sigma_3 |\xi|^2) +(|\theta| - \sigma_2^2 \sin^2{\angle(\xi, \xi_1)}) |\xi|^2 
+ \mathcal{O}(\max (L_1, L_2)).
\end{split}\label{modu-1-8}
\end{equation}
Since $A$ is sufficiently large, $\sin{\angle(\xi, \xi_1)} \, (\sim A^{-1})$ is sufficiently small, so that 
we assume $|\theta| - \sigma_2^2 \sin^2{\angle(\xi, \xi_1)} > |\theta|/2$. 
Therefore, for fixed $\theta_1$, \eqref{modu-1-8} tells that $|\xi_1|$ is confined to a set of measure at most 
$\mathcal{O}(\max (L_1, L_2) / N_1)$. 
From $(\ta_1, \xi_1)  \in  {\mathfrak{D}}_{j_1}^{A}$, $\theta_1$ is confined to a set of measure $\sim A^{-1}$.
We observe
\begin{align*}
&  | \{ \xi_1 \ | \ (\ta_1, \xi_1) \in E(\ta, \xi) \} |\\
= &  \int_{\theta_1} \int_{|\xi_1|} {\mathbf 1}_{E(\ta,\xi)} (|\xi_1|, \theta_1) |\xi_1| d|\xi_1| d \theta_1 \\
\lesssim & A^{-1}\max (L_1, L_2).
\end{align*}
Combining \eqref{ele1-lem-a-bilinear}, this completes the proof of \eqref{b0.1}.
\end{proof}
\begin{prop}\label{thm2.6}
Let $L_{\textnormal{max}} \ll |\theta| 
N_{\min}^2$ 
and $ 64 \leq A \leq N_{\textnormal{max}}$, \ $16 \leq |j_1 - j_2 |\leq 32$. 
Then the following estimate holds:
\begin{equation}
\begin{split}
\|Q_{L_3}^{-\sigma_3} P_{N_3}(R_{j_1}^A Q_{L_1}^{\sigma_1}P_{N_1}u_{1}\cdot 
R_{j_2}^A Q_{L_2}^{\sigma_2}P_{N_2}u_{2})\|_{L^{2}_{tx}} & \\
\lesssim A^{\frac{1}{2}} N_1^{-1} L_1^{\frac{1}{2}}L_2^{\frac{1}{2}} L_3^{\frac{1}{2}} 
\|R_{j_1}^A Q_{L_1}^{\sigma_1}P_{N_1}u_{1}\|_{L^2_{tx}} &  \|R_{j_2}^A Q_{L_2}^{\sigma_2}P_{N_2}u_{2}\|_{L^2_{tx}}.
\end{split}\label{0609}
\end{equation}
\end{prop}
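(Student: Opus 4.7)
Following the nonlinear Loomis--Whitney strategy used in \cite{BHHT09} and \cite{BH11}, I reduce \eqref{0609} via Plancherel and duality to the trilinear form bound
\[
|I|:=\left|\iint f_1(\tau_1,\xi_1)\,f_2(\tau_2,\xi_2)\,\overline{f_3(\tau_1+\tau_2,\xi_1+\xi_2)}\,d\tau_1 d\tau_2 d\xi_1 d\xi_2\right|\lesssim A^{\frac{1}{2}}N_1^{-1}(L_1L_2L_3)^{\frac{1}{2}}\prod_{i=1}^{3}\|f_i\|_{L^2},
\]
where $f_i$ are localized in modulation shells of width $L_i$ and, for $i=1,2$, in the angular sectors $\Theta_{j_i}^A$. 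By Lemma~\ref{modul_est_2}, the hypothesis $L_{\max}\ll |\theta|N_{\min}^2$ forces $N_1\sim N_2\sim N_3$, so I work throughout in this resonant regime.

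The geometric heart of the argument is the quantitative transversality of the three characteristic paraboloids, measured by the triple product of their normals $n_i=(1,2\sigma_i\xi_i)$ at $\xi_3=-\xi_1-\xi_2$: a direct computation gives $|\det(n_1,n_2,n_3)|\sim |\theta|\,|\xi_1|\,|\xi_2|\,|\sin\angle(\xi_1,\xi_2)|\sim N_1^2/A$, using $|\xi_1|,|\xi_2|\sim N_1$ together with the angular separation $|\sin\angle(\xi_1,\xi_2)|\sim |j_1-j_2|/A\sim A^{-1}$ provided by $|j_1-j_2|\in[16,32]$. To exploit this I perform the change of variables $(\tau_1,\tau_2,\xi_1,\xi_2)\to(\eta_1,\eta_2,\eta_3,\xi_3,\theta_1)$, with $\eta_1,\eta_2$ the input modulations, $\eta_3:=\eta_1+\eta_2-\Phi(\xi_1,\xi_2)$ the output modulation for the resonance function $\Phi(\xi_1,\xi_2):=\sigma_1|\xi_1|^2+\sigma_2|\xi_2|^2+\sigma_3|\xi_1+\xi_2|^2$, and $\xi_3:=\xi_1+\xi_2$, $\theta_1:=\arg\xi_1$. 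The identity \eqref{modu-1-8} applied under $L_{\max}\ll |\theta|N_{\min}^2$ forces $|\partial\Phi/\partial|\xi_1||\sim N_1$ on the resonance set, controlling the final substep $|\xi_1|\to\eta_3$ of this change of variables.

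In the new coordinates, Cauchy--Schwarz in the three modulation variables (each ranging over an interval of length $\sim L_i$) contributes the factor $(L_1L_2L_3)^{1/2}$, while Cauchy--Schwarz in $\theta_1$ (confined to a sector of measure $\sim A^{-1}$) together with the transversality extraction in the $\xi_3$-integration contributes the remaining $A^{1/2}N_1^{-1}$; multiplying these yields the desired bound on $|I|$. The principal technical obstacle I anticipate is the correct book-keeping for the simultaneous modulation Cauchy--Schwarz, since the three $\eta_i$ are coupled by $\eta_3=\eta_1+\eta_2-\Phi(\xi_1,\xi_2)$ through the spatial variables; the resolution, as in \cite{BHHT09}, is to regard the substitution as a non-degenerate parametrization of a $5$-dimensional Loomis--Whitney surface on which the required Brascamp--Lieb-type estimate holds with transversality constant $\sim N_1^2/A$.
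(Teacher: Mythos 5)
Your overall strategy---reduce to a trilinear form, invoke Lemma \ref{modul_est_2} to force $N_1\sim N_2\sim N_3$, and then apply a transversal convolution (nonlinear Loomis--Whitney) estimate with transversality governed by $|\theta|$ and the angular separation $|\sin\angle(\xi_1,\xi_2)|\sim A^{-1}$---is the same as the paper's, and your determinant computation is essentially the paper's verification of \eqref{trans}. However, there is a genuine gap: the Loomis--Whitney reduction does not cover the whole range $L_{\max}\ll|\theta|N_{\min}^2$. Proposition \ref{prop2.7} requires $\textnormal{diam}(S_i)\lesssim d$ with $d\sim A^{-1}$ after rescaling to unit frequency, so $f_1$, $f_2$, $f_3$ must first be decomposed into pieces of spatial-frequency diameter $\sim A^{-1}N_1$; this decomposition is almost orthogonal only when the modulation thickness is compatible with it, namely the resonance identity confines $|\xi_2|$ to a set of measure $\sim A^{-1}N_1$ for $|\xi_1|$ in an annulus of that width precisely when $L_{\max}\le A^{-1}N_1^2$. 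In the complementary regime $A^{-1}N_1^2\le L_{\max}\ll N_1^2$ (nonempty, since $A$ may be as large as $N_{\max}$) each radial piece of $f_1$ interacts with $\sim A L_{\max}N_1^{-2}$ pieces of $f_2$, and your scheme loses a corresponding factor. The paper therefore splits into cases: when $L_{\max}\ge A^{-1}N_1^2$ it does not use Loomis--Whitney at all, but instead applies the bilinear bounds \eqref{bilinear-12}--\eqref{bilinear-31} of Theorem \ref{thm-0.3} (in dual form according to which $L_j$ is maximal) together with the elementary conversion $A^{-1/2}\le N_1^{-1}L_{\max}^{1/2}$. Your write-up needs this case distinction, or an argument that your change of variables survives without it---which the coupling of the three modulation variables that you yourself flag makes unlikely.

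A smaller but still substantive point: what Proposition \ref{prop2.7} requires in \eqref{trans} is a lower bound on $|\textnormal{det}\,N(\lambda_1,\lambda_2,\lambda_3)|$ uniform over \emph{all} triples of points on the three surface pieces, not an asymptotic ``$\sim N_1^2/A$'' at a single resonant triple. This is why the paper first shrinks the caps as in \eqref{2017-06-10e} and controls the oscillation of the unit normals in \eqref{2017-06-10f}--\eqref{2017-06-10h} before verifying the determinant bound at one convolution-compatible triple $(\lambda_1',\lambda_2',\lambda_3')$ and propagating it. Your sketch should make this stability step explicit, and should also note that the transversality constant must be measured with the \emph{normalized} normals (the paper's $d=|\theta|MA^{-1}/2$ at unit scale), since that is the quantity entering the $d^{-1/2}$ loss in Proposition \ref{prop2.7}.
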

For the proof of the above proposition, we introduce the important estimate. See \cite{BH11} for more general case.
\begin{prop}[\cite{BHT10} Corollary 1.5] \label{prop2.7}
Assume that the surface $S_i$ $(i=1,2,3)$ 
is an open and bounded subset of $S_i^*$ which 
satisfies the following conditions \textnormal{(Assumption 1.1 in \cite{BHT10})}.

\textnormal{(i)} $S_i^*$ is defined as
\begin{equation*}
S_i^* = \{ {\lambda_i} \in U_i \ | \ \Phi_i({\lambda_i}) = 0 , \nabla \Phi_i \not= 0, \Phi_i \in C^{1,1} (U_i) \}, 
\end{equation*}
for a convex $U_i \subset \R^3$ such that \textnormal{dist}$(S_i, U_i^c) \geq$ \textnormal{diam}$(S_i)$;

\textnormal{(ii)} the unit normal vector field $\mathfrak{n}_i$ on $S_i^*$ satisfies the H\"{o}lder condition
\begin{equation*}
\sup_{\lambda, \lambda' \in S_i^*} \frac{|\mathfrak{n}_i(\lambda) - 
\mathfrak{n}_i(\lambda')|}{|\lambda - \lambda'|}
+ \frac{|\mathfrak{n}_i(\lambda) ({\lambda} - {\lambda}')|}{|{\lambda} - {\lambda}'|^2} \lesssim 1;
\end{equation*}

\textnormal{(iii)} there exists $d >0$ such that the matrix ${N}({\lambda_1}, {\lambda_2}, {\lambda_3}) = ({\mathfrak{n}_1} 
({\lambda_1}), {\mathfrak{n}_2}({\lambda_2}), {\mathfrak{n}_3}({\lambda_3}))$ 
satisfies the transversality condition
\begin{equation*}
d \leq \textnormal{det} {N}({\lambda_1}, {\lambda_2}, {\lambda_3})  \leq 1
\end{equation*}
for all $({\lambda_1}, {\lambda_2}, {\lambda_3}) \in {S_1^*} \cross {S_2^*} \cross {S_3^*}$.

We also assume \textnormal{diam}$({S_i}) \lesssim d$. 
Then for functions $f \in L^2 (S_1)$ and $g \in L^2 (S_2)$, the restriction of the convolution $f*g$ to 
$S_3$ is a well-defined $L^2(S_3)$-function which satisfies 
\begin{equation*}
\| f *g \|_{L^2(S_3)} \lesssim \frac{1}{\sqrt{d}} \| f \|_{L^2(S_1)} \| g\|_{L^2(S_2)}.
\end{equation*}
\end{prop}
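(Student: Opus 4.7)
The plan is to reduce the nonlinear statement to the classical three-dimensional Loomis--Whitney inequality on $\R^3$ via a bi-Lipschitz change of variables whose Jacobian is controlled from below by the transversality constant $d$.

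First I would dualize, reformulating the bound as the symmetric trilinear form estimate
\[
\left| \int_{\Sigma} f_1(\lambda_1) f_2(\lambda_2) f_3(\lambda_3) \, d\mu \right| \lesssim d^{-1/2} \prod_{i=1}^{3} \|f_i\|_{L^2(S_i)},
\]
where $\Sigma := \{(\lambda_1, \lambda_2, \lambda_3) \in S_1 \times S_2 \times S_3 : \lambda_1 + \lambda_2 = \lambda_3\}$ is a three-dimensional submanifold (by the transversality hypothesis (iii)) equipped with its induced measure. Then I would locally parametrize each $S_i$ as a $C^{1,1}$ graph $\phi_i: V_i \subset \R^2 \to S_i$ and apply the implicit function theorem to choose coordinates $(x^1, x^2, x^3) \in \R^3$ on $\Sigma$ for which the three natural projections $\pi_i : \Sigma \to S_i$, composed with $\phi_i^{-1}$, become precisely the three coordinate projections $\R^3 \to \R^2$ forgetting the $i$-th coordinate. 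The transversality condition (iii) is exactly what guarantees such adapted coordinates exist and that the change of variables has Jacobian bounded below by a multiple of $d$.

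Under this change of variables, the trilinear form becomes (up to Jacobian factors absorbed as the overall $d^{-1/2}$ loss)
\[
\int_{\R^3} g_1(x^2, x^3) g_2(x^1, x^3) g_3(x^1, x^2) \, dx^1 dx^2 dx^3,
\]
provided the three projections are exactly affine. For this linear model the classical Loomis--Whitney inequality gives the bound $\prod_i \|g_i\|_{L^2(\R^2)}$, completing the reduction in the flat case.

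The principal obstacle is that the adapted coordinates above are only $C^{1,1}$, not affine, so the projections are only close to coordinate projections; classical Loomis--Whitney does not apply directly. To handle this I would run an induction-on-scale bootstrap, as in \cite{BHT10}: decompose $\Sigma$ into cap-like pieces at subscales $\delta \leq \operatorname{diam}(S_i)$, apply the inequality inductively on each piece where the projections are well approximated by their linearizations, and reassemble using almost-orthogonality in $L^2$. The $C^{1,1}$ regularity of the unit normals in hypothesis (ii) is exactly what bounds the quadratic deviation between the true projections and their linearizations, while the diameter restriction $\operatorname{diam}(S_i) \lesssim d$ ensures this perturbation stays subcritical relative to $d$ throughout the bootstrap. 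Closing the induction then yields the stated $d^{-1/2}$ bound.
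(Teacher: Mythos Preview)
The paper does not provide its own proof of this proposition; it is quoted verbatim as Corollary~1.5 of \cite{BHT10} and used as a black box in the proof of Proposition~\ref{thm2.6}. Your outline is essentially the argument of \cite{BHT10} (building on \cite{BKW}): dualize to a trilinear form, straighten the surfaces to graphs, and run an induction-on-scales in which at each stage the $C^{1,1}$ normals allow a good linear approximation so that classical Loomis--Whitney applies on subcaps, with the transversality lower bound $d$ controlling the Jacobian and the diameter condition $\operatorname{diam}(S_i)\lesssim d$ keeping the perturbation subcritical. So your proposal is correct and matches the cited source, but there is nothing in the present paper to compare it against.
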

\begin{rem}
(1) If $S_1$, $S_2$, $S_3$ are given coordinate hyperplanes in $\R^3$;
\begin{align*}
S_1 = \{ (x_1, x_2, x_3) \in \R^3 \, | \,  x_1 =0\},\\
S_2 = \{ (x_1, x_2, x_3) \in \R^3 \, | \,  x_2 =0\},\\
S_3 = \{ (x_1, x_2, x_3) \in \R^3 \, | \,  x_3 =0\},
\end{align*}
then the inequality $\| f *g \|_{L^2(S_3)} \lesssim \| f \|_{L^2(S_1)} \| g\|_{L^2(S_2)}$ is known as the classical Loomis-Whitney inequality in $\R^3$ which was introduced in \cite{LW}. 
Thus, we would say that Proposition \ref{prop2.7} is the generalization of the Loomis-Whitney inequality. \\
(2) As was mentioned in \cite{BHT10}, the condition of $S_i^*$ in \textnormal{(i)} is used only to ensure the existence of a global representation of 
$S_i$ as a graph. In the proof of Proposition \ref{thm2.6}, the implicit function theorem and the other conditions may show the existence 
of such a graph. Thus we will not treat the condition \textnormal{(i)} in the proof of Proposition \ref{thm2.6}.
\end{rem}

\begin{proof}[Proof of Proposition \ref{thm2.6}]
We divide the proof into the following two cases:
\begin{equation*}
\textnormal{(I)} \quad  L_{\textnormal{max}} \geq A^{-1} N_1^2, \qquad 
\textnormal{(I \hspace{-0.15cm}I)} \quad L_{\textnormal{max}} \leq  A^{-1} N_1^2.
\end{equation*}
We first consider the case $\textnormal{(I)}$. 
We subdivide the proof further.
\begin{equation*}
\textnormal{(Ia)} \quad  L_3 \geq A^{-1} N_1^2, \qquad 
\textnormal{(Ib)} \quad L_1 \geq  A^{-1} N_1^2, \qquad
\textnormal{(Ic)} \quad L_2 \geq  A^{-1} N_1^2.
\end{equation*}
For the case $\textnormal{(Ia)}$, we use the estimate \eqref{bilinear-12} in Theorem \ref{thm-0.3}.
\begin{align*}
& \|Q_{L_3}^{-\sigma_3} P_{N_3}(R_{j_1}^A Q_{L_1}^{\sigma_1}P_{N_1}u_{1}\cdot 
R_{j_2}^A Q_{L_2}^{\sigma_2}P_{N_2}u_{2})\|_{L^{2}_{tx}}  \\
\lesssim & A^{- \frac{1}{2}}  L_1^{\frac{1}{2}}L_2^{\frac{1}{2}}  
\|R_{j_1}^A Q_{L_1}^{\sigma_1}P_{N_1}u_{1}\|_{L^2_{tx}}   \|R_{j_2}^A Q_{L_2}^{\sigma_2}P_{N_2}u_{2}\|_{L^2_{tx}}\\
\lesssim &  N_1^{-1} L_1^{\frac{1}{2}}L_2^{\frac{1}{2}} L_3^{\frac{1}{2}}  
\|R_{j_1}^A Q_{L_1}^{\sigma_1}P_{N_1}u_{1}\|_{L^2_{tx}}   \|R_{j_2}^A Q_{L_2}^{\sigma_2}P_{N_2}u_{2}\|_{L^2_{tx}}.
\end{align*}
For $\textnormal{(Ib)}$, by the dual estimate, H\"{o}lder inequality and \eqref{bilinear-23}, we have
\begin{align*}
& \|Q_{L_3}^{-\sigma_3} P_{N_3}(R_{j_1}^A Q_{L_1}^{\sigma_1}P_{N_1}u_{1}\cdot 
R_{j_2}^A Q_{L_2}^{\sigma_2}P_{N_2}u_{2})\|_{L^{2}_{tx}}\\
\sim & \sup_{\|u_3 \|_{L^2} =1} \left| \int (R_{j_1}^A Q_{L_1}^{\sigma_1}P_{N_1}u_{1}) \,  
(R_{j_2}^A Q_{L_2}^{\sigma_2}P_{N_2}u_{2}) \, (Q_{L_3}^{\sigma_3} P_{N_3} u_3 ) \, dxdt \right|\\
\lesssim & \| R_{j_1}^A Q_{L_1}^{\sigma_1}P_{N_1}u_{1} \|_{L^2} 
\sup_{\|u_3 \|_{L^2} =1} \|R_{j_1}^A Q_{L_1}^{-\sigma_1} P_{N_1}(R_{j_2}^A Q_{L_2}^{\sigma_2}P_{N_2}u_{2}\cdot 
 Q_{L_3}^{\sigma_3}P_{N_3}u_{3})\|_{L^{2}_{tx}}  \\
\lesssim & N^{-1}_1 L_1^{\frac{1}{2}} L_2^{\frac{1}{2}}L_3^{\frac{1}{2}} 
\| R_{j_1}^A Q_{L_1}^{\sigma_1}P_{N_1}u_{1} \|_{L^2} 
\|R_{j_2}^A Q_{L_2}^{\sigma_2}P_{N_2}u_{2}\|_{L^2_{tx}}.
\end{align*}
The case $\textnormal{(Ic)}$ can be treated similarly. 

For $\textnormal{(I \hspace{-0.15cm}I)}$, by Plancherel's theorem and the dual estimate, \eqref{0609} is verified by the following estimate:
\begin{equation}
\begin{split}
 \left| \int  f_1 (\ta_1, \xi_1) f_2 (\ta_2, \xi_2) f_3 (\tau_1+\tau_2, \xi_1+ \xi_2) d\ta_1 d\ta_2 d\xi_1 d \xi_2 \right| & \\ \lesssim 
A^{\frac{1}{2}} N_1^{-1} ( L_1 L_2 L_3)^\frac{1}{2}  
\|f_1\|_{L^2_{\tau \xi}} & \|f_2 \|_{L^2_{\tau \xi}}  \|f_3 \|_{L^2_{\tau \xi}}\label{0609-2}
\end{split}
\end{equation}
where $f_{i}=\F_{tx} [R_{j_i}^A Q_{L_i}^{\sigma_i}P_{N_i}u_{i}]$ $(i=1,2)$ and $f_{3}=\F_{tx} [Q_{L_3}^{- \sigma_3}P_{N_3}u_{3}]$. To show \eqref{0609-2}, we first decompose $f_1$ by thickened circular 
localization characteristic functions $\left\{ {\mathbf 1}_{\mathbb{S}_{\delta}^{{N_1} + k \delta}} \right\}_{k=0}^{\left[\frac{N_1}{\delta}\right]+1}$ 
where $[s]$ denotes the maximal integer which is not greater than $s \in \R$ and $\mathbb{S}_\delta^{\xi^0} = \{ (\ta, \xi ) \in \R \cross \R^2 \ | \ \xi^0 \leq \LR{\xi}\leq \xi^0 + \delta \}$ with 
$\delta = A^{-1} N_1$ as follows:
\begin{equation*}
f_1 = \sum_{k= 0}^{\left[\frac{N_1}{\delta}\right]+1} 
{\mathbf 1}_{\mathbb{S}_{\delta}^{N_1 + k \delta} } f_1.
\end{equation*}
Thanks to $L_{\textnormal{max}} \leq  A^{-1} N_1^2$, for each 
$f_{1, k} := {\mathbf 1}_{\mathbb{S}_{\delta}^{N_1 + k \delta} } f_1$ with fixed $k \in [0, \left[N_1/\delta\right]+1]$, 
we may assume that $\supp f_2$ is confined to $\mathbb{S}_\delta^{\xi^0(k)}$ with some 
fixed $\xi^0(k) \sim N_2$. Indeed, if $A$ is sufficiently large, 
from $L_{\textnormal{max}} \leq  A^{-1} N_1^2$, we get
\begin{align*}
& |\sigma_3 |\xi_1+\xi_2|^2 + \sigma_1 |\xi_1|^2 + \sigma_2 |\xi_2|^2| \leq 3A^{-1}N_1^2 \\
\Longrightarrow & -3A^{-1}N_1^2 \leq 
(\sigma_2 + \sigma_3) |\xi_2|^2 + 2 \sigma_3 |\xi_2| |\xi_1| \cos \theta_{12} + (\sigma_1 + \sigma_3) |\xi_1|^2 \leq 
3A^{-1}N_1^2 \\
\Longrightarrow &  
\begin{cases} &  \sigma_3 |\xi_1| \cos \theta_{12}
- |\xi_1| \sqrt{|\theta| - \sigma_3^2 \sin^2 \theta_{12} + 2^4 A^{-1}(\sigma_2 + \sigma_3)} \\
& \qquad \leq 
(\sigma_2+\sigma_3) |\xi_2| \leq 
 \sigma_3 |\xi_1| \cos \theta_{12} 
- |\xi_1| \sqrt{|\theta| - \sigma_3^2 \sin^2 \theta_{12} - 2^4 A^{-1}(\sigma_2 + \sigma_3)}, \\
&{\rm or}\\
& \sigma_3 |\xi_1| \cos \theta_{12}
+ |\xi_1| \sqrt{|\theta| - \sigma_3^2 \sin^2 \theta_{12} - 2^4 A^{-1}(\sigma_2 + \sigma_3)} \\
& \qquad \leq 
(\sigma_2+\sigma_3) |\xi_2| \leq 
 \sigma_3 |\xi_1| \cos \theta_{12} 
+ |\xi_1| \sqrt{|\theta| - \sigma_3^2 \sin^2 \theta_{12} + 2^4 A^{-1}(\sigma_2 + \sigma_3)},
\end{cases}
\end{align*}
where $\theta_{12} := \angle(\xi_1,\xi_2).$ Since $|\theta_{12}|$ is confined to a set of measure $\sim A^{-1}$, 
this suggests that 
if $(\ta_1, \xi_1) \in {\mathbf 1}_{\mathbb{S}_{\delta}^{N_1 + k \delta} } $ with fixed $k \in [0, \left[N_1/\delta\right]+1]$, $|\xi_2|$ is restricted to a set of measure $\sim \delta$. 
While, by symmetry, if $(\ta_2, \xi_2) \in {\mathbf 1}_{\mathbb{S}_{\delta}^{\xi^0 (k)} } $ with fixed $k \in [0, \left[N_1/\delta\right]+1]$, $|\xi_1|$ is confined to a set of measure $\sim \delta$. 
Thus we can assume that $f_1$ and $f_2$ in \eqref{0609-2} satisfy 
$\supp f_1 \subset \mathbb{S}_{\delta}^{N_1 + k \delta}$ 
and $\supp f_2 \subset \mathbb{S}_\delta^{\xi^0(k)}$ with fixed $k$. 
Furthermore, we apply a harmless decomposition to $f_1$, $f_2$, $f_3$ and assume that 
there exist $\xi^0_{f_1}$, $\xi^0_{f_2}$, $\xi^0_{f_3} \in \R^2$ such that 
$\supp f_1 \subset C_{A^{-1} N_1}(\xi_{f_1}^0)$, 
$\supp f_2 \subset C_{A^{-1} N_1}(\xi_{f_2}^0)$, 
$\supp f_3 \subset C_{A^{-1} N_1}(\xi_{f_3}^0)$ where 
\begin{equation*}
C_{\delta'}(\xi') := \{ (\ta, \xi) \in \R^3 \ | \ | \xi -\xi' | \leq \delta' \} \quad 
\textnormal{with some} \ \delta' >0  .
\end{equation*}
We apply the same strategy as 
that of the proof of 
Proposition 4.4 in \cite{BHHT09}. 
Applying the transformation $\ta_1 = - \sigma_1 |\xi_1|^2 + c_1$ and $\ta_2 = - \sigma_2 |\xi_2|^2 + c_2$ and Fubini's theorem, we find that it suffices to prove
\begin{equation}
\begin{split}
 \left| \int  f_1 (\phi_{c_1}^{\sigma_1} (\xi_1)) f_2 (\phi_{c_2}^{\sigma_2} (\xi_2)) 
f_3 (\phi_{c_1}^{\sigma_1} (\xi_1) + \phi_{c_2}^{\sigma_2} (\xi_2))  d\xi_1 d \xi_2 \right| & \\ \lesssim 
A^{\frac{1}{2}} N_1^{-1} 
\|f_1 \circ \phi_{c_1}^{\sigma_1} \|_{L_\xi^2} & \|f_2 \circ \phi_{c_2}^{\sigma_2} \|_{L_\xi^2}  \|f_3 \|_{L^2_{\tau \xi}}\label{2017-06-10a}
\end{split}
\end{equation}
where $f_3(\ta, \xi)$ is supported in $c_0 \leq \ta - \sigma_3 |\xi|^2 \leq c_0 +1$ and 
\begin{equation*}
\phi_{c_1}^{\sigma_1} (\xi) = (-\sigma_1 |\xi|^2 + c_1, \xi), 
\quad \phi_{c_2}^{\sigma_2} (\xi) = (-\sigma_2 |\xi|^2 + c_2, \xi).
\end{equation*}
We use the scaling $(\ta, \, \xi) \to (N_1^2 \ta , \, N_1 \xi)$ to define
\begin{equation*}
\tilde{f_1} (\ta_1, \xi_1) = f_1 (N_1^2 \ta_1, N_1 \xi_1), \quad \tilde{f_2} (\ta_2, \xi_2) = f_2 (N_1^2 \ta_2, N_1 \xi_2), 
\quad \tilde{f_3} (\ta, \xi) = f_3(N_1^2 \ta, N_1 \xi).
\end{equation*}
If we set ${\tilde{c_k}} = N_1^{-2} c_k$, inequality \eqref{2017-06-10a} reduces to
\begin{equation}
\begin{split}
 \left| \int  \tilde{f_1} (\phi_{\tilde{c_1}}^{\sigma_1} (\xi_1)) \tilde{f_2} (\phi_{\tilde{c_2}}^{\sigma_2} (\xi_2)) 
\tilde{f_3} (\phi_{\tilde{c_1}}^{\sigma_1} (\xi_1) + \phi_{\tilde{c_2}}^{\sigma_2} (\xi_2))  d\xi_1 d \xi_2 \right| & \\ \lesssim 
A^{\frac{1}{2}} N_1^{-1} 
\|\tilde{f_1} \circ \phi_{\tilde{c_1}}^{\sigma_1} \|_{L_\xi^2} & \|\tilde{f_2} \circ \phi_{\tilde{c_2}}^{\sigma_2} \|_{L_\xi^2}  \|\tilde{f_3} \|_{L^2_{\tau \xi}}\label{2017-06-10b}
\end{split}
\end{equation}
Note that $\tilde{f_3}$ is supported in $S_3(N_1^{-2})$ where 
\begin{equation*}
S_3 (N_1^{-2}) = \left\{ (\ta, \xi) \in C_{A^{-1}}(N_1^{-1} \xi_{f_3}^0) 
\ | \ \sigma_3 |\xi|^2 +\frac{c_0}{N_1^2} \leq \ta  \leq \sigma_3 |\xi|^2 +\frac{c_0+1}{N_1^2} \right\}.
\end{equation*}
By density and duality it suffices to show for continuous 
$\tilde{f_1}$ and $\tilde{f_2}$ that
\begin{equation}
\| \tilde{f_1} |_{S_1} * \tilde{f_2} |_{S_2} \|_{L^2(S_3 (N_1^{-2}))} 
\lesssim A^{\frac{1}{2}} N_1^{-1} 
\| \tilde{f_1} \|_{L^2(S_1)} \| \tilde{f_2} \|_{L^2(S_2)}\label{2017-06-10c}
\end{equation}
where $S_1$, $S_2$ denote the following surfaces 
\begin{align*}
S_1 = \{ \phi_{\tilde{c_1}}^{\sigma_1} (\xi_1)\in 
C_{ A^{-1}}(N_1^{-1} \xi_{f_1}^0) \}, \\
S_2 = \{ \phi_{\tilde{c_2}}^{\sigma_2} (\xi_2)\in 
C_{A^{-1}}(N_1^{-1} \xi_{f_2}^0) \}.
\end{align*}
\eqref{2017-06-10c} is immediately obtained by
\begin{equation}
\| \tilde{f_1} |_{S_1} * \tilde{f_2} |_{S_2} \|_{L^2(S_3)} 
\lesssim A^{\frac{1}{2}}  
\| \tilde{f_1} \|_{L^2(S_1)} \| \tilde{f_2} \|_{L^2(S_2)}\label{2017-06-10d}
\end{equation}
where 
\begin{equation*}
S_3 = \{ (\psi (\xi), \xi) \in C_{A^{-1}}(N_1^{-1} \xi_{f_3}^0) 
\ | \ \psi(\xi) =  \sigma_3 |\xi|^2 +\frac{c_0}{N_1^2} \}.
\end{equation*}
Since $|N_1^{-1} \xi_{f_1}^0| \sim |N_1^{-1} \xi_{f_2}^0| \sim |N_1^{-1} \xi_{f_3}^0| \sim 1$, after suitable harmless decomposition, we can assume 
\begin{equation}
\textnormal{diam}(S_k) \leq 2^{-10} |\theta| M  A^{-1},
\qquad (k=1,2,3).\label{2017-06-10e}
\end{equation}
Here we used the harmless constant
\begin{equation*}
M := \LR{\sigma_1}^{-2} \LR{\sigma_2}^{-2} \LR{\sigma_3}^{-2} 
\min (1, |\sigma_1 + \sigma_2|, \ |\sigma_2 + \sigma_3|, \ |\sigma_3 + \sigma_1|).
\end{equation*}
For any $\lambda_i \in S_i$, $i=1,2,3$, there exist $\xi_1$, $\xi_2$, $\xi$ such that
\begin{equation*}
\lambda_1=\phi_{{{\tilde{c_1}}}}^{\sigma_1} (\xi_1), \quad \lambda_2 =  \phi_{{{\tilde{c_2}}}}^{\sigma_2} (\xi_2), \quad \lambda_3 = (\psi (\xi), \xi),
\end{equation*}
and the unit normals ${\mathfrak{n}}_i$ on $\lambda_i$ are written as
\begin{align*}
& {\mathfrak{n}}_1(\lambda_1) = \frac{1}{\LR{2 {\sigma_1} |\xi_1|}} 
\left(1, \ 2 {\sigma_1} \xi_1^{(1)}, \ 2 {\sigma_1} \xi_1^{(2)} \right), \\
& {\mathfrak{n}}_2(\lambda_2) = \frac{1}{\LR{2 {\sigma_2} |\xi_2|}} 
\left(1, \ 2 {\sigma_2} \xi_2^{(1)}, \ 2 {\sigma_2} \xi_2^{(2)} \right), \\
& {\mathfrak{n}}_3(\lambda_3) = \frac{1}{\LR{2 {\sigma_3} |\xi|}} \ 
\left(-1, \ 2 {\sigma_3} \xi^{(1)}, \ 2 {\sigma_3} \xi^{(2)} \right),
\end{align*}
where $\xi^{(i)}$ $(i=1,2)$ denotes the $i$-th component of $\xi$. 
Clearly, the surfaces $S_1$, $S_2$, $S_3$ satisfy the following 
H\"{o}lder condition.
\begin{equation}
\sup_{\lambda_i, \lambda_i' \in S_i} \frac{|\mathfrak{n}_i(\lambda_i) - 
\mathfrak{n}_i(\lambda_i')|}{|\lambda_i - \lambda_i'|}
+ \frac{|\mathfrak{n}_i(\lambda_i) (\lambda_i - \lambda_i')|}{|\lambda_i - \lambda_i'|^2} \leq 2^3.\label{aiueo1}
\end{equation}
We may assume that there exist $\xi_1', \xi_2', \xi' \in \R^2$ such that
\begin{equation*}
\xi_1' + \xi_2' = \xi', \quad \phi_{{{\tilde{c_1}}}}^{\sigma_1} (\xi_1') \in S_1, \ \phi_{{{\tilde{c_2}}}}^{\sigma_2} (\xi_2') \in S_2, \ (\psi (\xi'), \xi') \in S_3,
\end{equation*}
otherwise the left-hand side of \eqref{2017-06-10d} vanishes. 
Let $\lambda_1' = \phi_{{{\tilde{c_1}}}}^{\sigma_1}(\xi_1')$, $\lambda_2' = \phi_{{{\tilde{c_2}}}}^{\sigma_2} (\xi_2')$, $\lambda_3' =  (\psi (\xi'), \xi') $. 
For any $\lambda_1 = \phi_{{{\tilde{c_1}}}}^{\sigma_1} (\xi_1) \in S_1$, 
we deduce from $\lambda_1$, $\lambda_1' \in S_1$ 
and \eqref{2017-06-10e} that
\begin{equation}
|{\mathfrak{n}}_1(\lambda_1) - {\mathfrak{n}}_1(\lambda_1')| \leq 2^{-7} |\theta| 
M A^{-1}.\label{2017-06-10f}
\end{equation}
Similarly, for any $\lambda_2 \in S_2$ and $\lambda_3 \in S_3$ we have
\begin{align}
& |{\mathfrak{n}}_2(\lambda_2) - {\mathfrak{n}}_2(\lambda_2')| \leq 2^{-7} |\theta| 
M  A^{-1}.\label{2017-06-10g}\\
& |{\mathfrak{n}}_3(\lambda_3) - {\mathfrak{n}}_3(\lambda_3')| \leq 2^{-7} |\theta| 
M A^{-1}.\label{2017-06-10h}
\end{align}
From \eqref{2017-06-10e}-\eqref{aiueo1}, 
once the following transversality condition \eqref{trans} is verified, we obtain the desired estimate \eqref{2017-06-10d}  by applying Proposition \ref{prop2.7} with $d = |\theta| 
M A^{-1}/2$.
\begin{equation}
\frac{|\theta|}{2} 
 M  A^{-1} \leq |\textnormal{det} N(\lambda_1, \lambda_2, \lambda_3)| \quad 
\textnormal{for any} \ \lambda_i \in S_i.\label{trans}
\end{equation}
From \eqref{2017-06-10f}-\eqref{2017-06-10h} it suffices to show
\begin{equation}
|\theta|  
M A^{-1} \leq |\textnormal{det} N(\lambda_1', \lambda_2', \lambda_3')| .
\end{equation}
Seeing that $\lambda_1' = \phi_{\tilde{c_1}}^{\sigma_1} (\xi_1')$, $\lambda_2' = \phi_{\tilde{c_2}}^{\sigma_2} (\xi_2')$, $\lambda_3' =  (\psi(\xi'), \xi') $ and 
$\xi_1'+\xi_2' = \xi'$, we get
\begin{align*}
|\textnormal{det} N(\lambda_1', \lambda_2', \lambda_3')| \geq & 
\frac{1}{\LR{2 {\sigma_3} |\xi'|}} 
\frac{1}{\LR{2 {\sigma_1} |\xi_1'|}} \frac{1}{\LR{2 {\sigma_2} |\xi_2'|}}  \left|\textnormal{det}
\begin{pmatrix}
1 & 1 & - 1 \\
 {\sigma_1} \xi_1'^{(1)}  &  {\sigma_2} \xi_2'^{(1)}  & {\sigma_3} \xi'^{(1)} \\
 {\sigma_1} \xi_1'^{(2)}   & {\sigma_2} \xi_2'^{(2)}  & {\sigma_3} \xi'^{(2)} 
\end{pmatrix} \right| \notag \\
\geq & \frac{1}{8} 
|\theta|  
M
 \left| \frac{\xi_1'^{(1)}  \xi_2'^{(2)}  - \xi_1'^{(2)} \xi_2'^{(1)}}{|\xi_1'||\xi_2'|} \right|  \notag\\
\geq & |\theta|  
M A^{-1}.
\end{align*}
\end{proof}
\subsection{The estimates for low modulation, $3$D}
Similarly to $2$D, we will utilize the operators with respect to angular variables. The following operators were introduced in \cite{BH11}. 
\begin{defn}[\cite{BH11}]
For each $A \in \N$, $\{\omega_A^j \}_{j \in \Omega_A} $ denotes a set of spherical caps of ${\BBB S}^2$ with the following properties:\\
(i) The angle $\angle{(x,y)}$ between 
any two vectors in $x$, $y \in \omega_A^j$ satisfies
\begin{equation*}
\left| \angle{(x,y)} \right| \leq A^{-1}.
\end{equation*}
(ii) Characteristic functions $\{ {\mathbf 1}_{\omega_A^j} \}$ satisfy
\begin{equation*}
1 \leq \sum_{j \in \Omega_A} {\mathbf 1}_{\omega_A^j}(x) \leq 3, \qquad \forall x \in {\BBB S}^2.
\end{equation*}

We define the function
\begin{equation*}
\alpha (j_1,j_2) = \inf \left\{ \left| \angle{( \pm x, y)} \right| : \ x \in \omega_A^{j_1}, \ y \in \omega_A^{j_2} \right\}
\end{equation*}
which measures the minimal angle between any two straight lines through the spherical caps $\omega_A^{j_1}$ and 
$\omega_A^{j_2}$, respectively. It is easily observed that for any fixed $j_1 \in \Omega_A$ there exist only a finite number of $j_2 \in \Omega_A$ which satisfies $\alpha (j_1,j_2) \sim A^{-1}$.

Based on the above construction, for each $j \in \Omega_A$ we define 
\begin{equation*}
{\mathfrak{D}}_{j}^A = \left\{ (\ta, \xi) \in \R \cross (\R^3 \setminus \{0\}) \, : \, 
 \frac{\xi}{|\xi|} \in \omega_A^j \right\}
\end{equation*}
and the corresponding localization operator
\begin{equation*}
\F (R_j^A u) (\tau, \xi) = \frac{\chi_{\omega_j^A} (\frac{\xi}{|\xi|})}{\chi  (\frac{\xi}{|\xi|})} \F u (\tau , \xi).
\end{equation*}
\end{defn}
\begin{thm}\label{thm-0.3-3d}
Let $\displaystyle L_{\max} \ll |\theta|  N_{\min}^2$, 
$A \geq 1$ and 
$\alpha (j_1,j_2) \lesssim A^{-1}$. 
Then the following estimates holds:
\begin{align}
\begin{split}
\|Q_{L_3}^{-\sigma_3} P_{N_3}(R_{j_1}^A Q_{L_1}^{\sigma_1}P_{N_1}u_{1}\cdot 
R_{j_2}^A Q_{L_2}^{\sigma_2}P_{N_2}u_{2})\|_{L^{2}_{tx}} & \\
\lesssim A^{-1} N_1^{\frac{1}{2}} L_1^{\frac{1}{2}}L_2^{\frac{1}{2}} 
\|R_{j_1}^A Q_{L_1}^{\sigma_1}P_{N_1}u_{1}\|_{L^2_{tx}} &  \|R_{j_2}^A Q_{L_2}^{\sigma_2}P_{N_2}u_{2}\|_{L^2_{tx}},
\end{split}\label{bilinear-12-3d}\\
\begin{split}
\|R_{j_1}^A Q_{L_1}^{-\sigma_1} P_{N_1}(R_{j_2}^A Q_{L_2}^{\sigma_2}P_{N_2}u_{2}\cdot 
 Q_{L_3}^{\sigma_3}P_{N_3}u_{3})\|_{L^{2}_{tx}} & \\
\lesssim A^{-1} N_1^{\frac{1}{2}} L_2^{\frac{1}{2}}L_3^{\frac{1}{2}} 
\|R_{j_2}^A Q_{L_2}^{\sigma_2}P_{N_2}u_{2}\|_{L^2_{tx}} &  \|Q_{L_3}^{\sigma_3}P_{N_3}u_{3}\|_{L^2_{tx}},
\end{split}\label{bilinear-23-3d}\\
\begin{split}
\|R_{j_2}^A Q_{L_2}^{-\sigma_2} P_{N_2}( Q_{L_3}^{\sigma_3}P_{N_3}u_{3}\cdot 
 R_{j_1}^A Q_{L_1}^{\sigma_1}P_{N_1}u_{1})\|_{L^{2}_{tx}} & \\
\lesssim A^{-1} N_1^{\frac{1}{2}} L_3^{\frac{1}{2}}L_1^{\frac{1}{2}} \|Q_{L_3}^{\sigma_3}P_{N_3}u_{3}\|_{L^2_{tx}} & 
\|R_{j_1}^A Q_{L_1}^{\sigma_1}P_{N_1}u_{1}\|_{L^2_{tx}}.
\end{split}\label{bilinear-31-3d}
\end{align}
\end{thm}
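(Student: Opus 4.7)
The plan is to mirror the proof of Theorem~\ref{thm-0.3}, replacing the arc--length measure on $\mathbb{S}^1$ by the surface--area measure on $\mathbb{S}^2$. When $A \sim 1$ the bound is immediate from Proposition~\ref{L2be}, so I focus on the regime $A \gg 1$. The hypothesis $L_{\max} \ll |\theta| N_{\min}^2$ combined with Lemma~\ref{modul_est_2} yields $N_1 \sim N_2 \sim N_3$. Given this dyadic comparability and $\alpha(j_1,j_2) \lesssim A^{-1}$, the antipodal alignment $\xi_1 \sim -\xi_2$ is excluded (it would force $|\xi_1+\xi_2| \ll N_1$, contradicting $|\xi_3| \sim N_1$), hence $\xi_3/|\xi_3|$ lies in a bounded number of caps $\omega_A^{j_3}$ with $\alpha(j_3,j_1), \alpha(j_3,j_2) \lesssim A^{-1}$. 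Inserting the harmless projector $R_{j_3}^A$ in front of $u_3$ reduces \eqref{bilinear-23-3d} and \eqref{bilinear-31-3d} to \eqref{bilinear-12-3d}.

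For \eqref{bilinear-12-3d}, I would apply Plancherel's theorem and Cauchy--Schwarz exactly as in the proof of Theorem~\ref{thm-0.3}, reducing the estimate to the pointwise measure bound
\[
\sup_{(\tau,\xi)\in \supp \psi_{N_3,L_3}^{-\sigma_3}} |E(\tau,\xi)| \lesssim A^{-2} N_1 L_1 L_2,
\]
where $E(\tau,\xi)$ is the analogous set of $(\tau_1,\xi_1)$ satisfying the modulation conditions $\langle \tau_1+\sigma_1|\xi_1|^2\rangle \sim L_1$, $\langle \tau-\tau_1+\sigma_2|\xi-\xi_1|^2\rangle \sim L_2$, the frequency supports $|\xi_i|\sim N_i$, and the angular conditions $(\tau_1,\xi_1)\in \mathfrak{D}_{j_1}^A$, $(\tau-\tau_1,\xi-\xi_1)\in\mathfrak{D}_{j_2}^A$. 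Taking the square root of this measure bound produces the claimed factor $A^{-1} N_1^{1/2} L_1^{1/2} L_2^{1/2}$.

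To estimate $|E(\tau,\xi)|$, the $\tau_1$-section has length $\lesssim \min(L_1,L_2)$ as in \eqref{ele1-lem-a-bilinear}. For $\xi_1$, I use spherical coordinates $\xi_1 = r\omega$ with $d\xi_1 = r^2\, dr\, d\sigma(\omega)$; the angular support confines $\omega$ to a cap of surface area $\sim A^{-2}$ and $r \sim N_1$. Fixing $\omega$, summing the two modulation expressions and using $\sigma_1+\sigma_2 \neq 0$ yields the algebraic identity
\[
\bigl((\sigma_1+\sigma_2)r - \sigma_2 \cos\angle(\xi,\xi_1)\,|\xi|\bigr)^2 = \bigl(|\theta|-\sigma_2^2 \sin^2\angle(\xi,\xi_1)\bigr)|\xi|^2 + \mathcal{O}(\max(L_1,L_2)),
\]
directly parallel to \eqref{modu-1-8}. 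Since $A$ is large, $\sin\angle(\xi,\xi_1) \lesssim A^{-1}$, so the right side exceeds $(|\theta|/2)|\xi|^2 \sim N_1^2$; taking square roots confines $r$ to an interval of length $\lesssim \max(L_1,L_2)/N_1$. Integrating against the cap measure and $r^2\, dr$ bounds the $\xi_1$-measure by $N_1^2 \cdot A^{-2} \cdot \max(L_1,L_2)/N_1 = A^{-2} N_1 \max(L_1,L_2)$, and multiplying by $\min(L_1,L_2)$ completes the measure bound.

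The main point I expect to require care is the spherical-cap geometry in the symmetry reduction of the first paragraph: one must verify that only a bounded number of caps $\omega_A^{j_3}$ can accommodate $\xi_1+\xi_2$ given the almost-alignment of $\xi_1$ and $\xi_2$ on $\mathbb{S}^2$, so that passing between \eqref{bilinear-12-3d} and the other two estimates costs no $A$-dependent factor. Beyond that, the new analytic ingredient is purely geometric: the cap area $A^{-2}$ (in place of arc length $A^{-1}$) and the Jacobian $r^2$ (in place of $r$) account precisely for the passage from the 2D bound $A^{-1/2} L_1^{1/2} L_2^{1/2}$ to the 3D bound $A^{-1} N_1^{1/2} L_1^{1/2} L_2^{1/2}$, with the extra $N_1^{1/2}$ arising from the $r^2$-factor on $\mathbb{S}^2$.
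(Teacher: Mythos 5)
Your argument is correct, but it takes a genuinely different route from the paper's. The paper does not redo the measure count in three dimensions: it uses the angular localization to place $\supp f_1$, $\supp f_2$ (after a rotation and decomposition) inside the slab $\{|\xi^{(3)}|\le N_1A^{-1}\}$, proves a two\--dimensional slice estimate with constant $A^{-1/2}(L_1L_2)^{1/2}$ for each fixed pair of third frequency coordinates (this is \eqref{2017-12-09}, whose proof is exactly the 2D argument of Theorem \ref{thm-0.3}), and then recovers the stated factor $A^{-1}N_1^{1/2}$ via Minkowski, Young and Cauchy--Schwarz over the slab thickness $N_1A^{-1}$. You instead run the Cauchy--Schwarz/measure\--of\--$E(\tau,\xi)$ argument directly in $\R\times\R^3$ in spherical coordinates: the $\tau_1$-section of length $\min(L_1,L_2)$, the cap of area $\sim A^{-2}$, the radial interval of length $\lesssim\max(L_1,L_2)/N_1$ coming from the same algebraic identity as \eqref{modu-1-8} (which involves only $|\xi_1|$, $|\xi|$ and $\angle(\xi,\xi_1)$ and is dimension\--free, with $L_{\max}\ll|\theta|N_{\min}^2$ guaranteeing the right-hand side is $\gtrsim N_1^2$), and the Jacobian $r^2\sim N_1^2$ combine to $|E|\lesssim A^{-2}N_1L_1L_2$, whose square root is the claimed constant. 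Your version is more self\--contained for this theorem; the paper's slicing buys the setup that is reused immediately in Proposition \ref{thm2.6-3d}, where the 2D Loomis--Whitney input must be applied slice by slice and a direct 3D count would not produce the extra $L_3^{1/2}$. One small inaccuracy: you claim the antipodal configuration is excluded because it would force $|\xi_1+\xi_2|\ll N_1$; with $|\xi_1|$, $|\xi_2|$ only dyadically comparable this is false (e.g. $\xi_2\approx-\xi_1/2$). It is also unnecessary: since $\alpha(j_1,j_2)$ measures angles between lines, in either the parallel or antiparallel case $\xi_3=-(\xi_1+\xi_2)$ with $|\xi_3|\sim N_1$ has direction within $O(A^{-1})$ of the line through $\omega_A^{j_1}$, so it lies in a bounded number of caps and the insertion of $R_{j_3}^A$ goes through as you intend.
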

\begin{proof}
Similarly to the proof of Theorem \ref{thm-0.3}, we assume that $A$ is sufficiently large and show only 
\eqref{bilinear-12-3d}. By Plancherel's theorem, \eqref{bilinear-12-3d} can be written as
\begin{equation}
\begin{split}
 \left\| \psi_{L_3} (\tau - \sigma_3 |\xi|^2) \psi_{N_3}(\xi) \int 
f_1 (\tau_1,\xi_1) f_2 (\tau-\tau_1, \xi-\xi_1)d\tau_1d\xi_1 \right\|_{L_{\tau \xi}^2} & \\ 
\lesssim 
A^{-1} N_1^{\frac{1}{2}} L_1^{\frac{1}{2}}L_2^{\frac{1}{2}} 
\|f_1\|_{L^2_{\tau \xi}} & \|f_2 \|_{L^2_{\tau \xi}},
\end{split}\label{bilinear-15-3d}
\end{equation}
where $f_{i}=\F_{tx} [R_{j_i}^A Q_{L_i}^{\sigma_i}P_{N_i}u_{i}]$ $(i=1,2)$. 
Let $\xi = ( \xi^{(1)}, \xi^{(2)}, \xi^{(3)}) \in \R^3$ and $\tilde{\xi} = ( \xi^{(1)}, \xi^{(2)}) \in \R^2$. 
Since $\supp f_i \subset {\mathfrak{D}}_{j_i}^A$, after applying rotation in space and suitable decomposition, we may assume that the supports of $f_1$ and $f_2$ are both contained in the following slab
\begin{equation*}
\Sigma_3 (N_1 A^{-1}) := \{ (\ta, \xi) \in \R \cross \R^3 \ | \ |\xi^{(3)}| \leq  N_1 A^{-1} \}.
\end{equation*}
Let $\psi_{N_3, L_3}^{\sigma_3}(\ta,\xi) := \psi_{L_3} (\tau - \sigma_3 |\xi|^2) \psi_{N_3}(\xi)$. 
It suffices to show
\begin{equation}
\begin{split}
& \left\| \int_{\R^3} \psi_{N_3, L_3}^{\sigma_3}(\ta,\xi)  
f_1 (\ta_1, \tilde{\xi_1}, \xi_1^{(3)}) \, 
f_2 (\ta - \ta_1, \tilde{\xi} - \tilde{\xi_1}, \xi^{(3)} - \xi_1^{(3)}) d\ta_1 d \tilde{\xi_1} \right\|_{L_{\tau \tilde{\xi}}^2}\\
& \qquad \qquad \qquad \qquad \qquad  \lesssim A^{-\frac{1}{2}}
(L_1 L_2)^{\frac{1}{2}}\|f_1 ( \xi_1^{(3)}) \|_{L_{\tau \tilde{\xi}}^2} 
\|f_2 (\xi^{(3)} - \xi_1^{(3)}) \|_{L_{\tau \tilde{\xi}}^2} 
\end{split}\label{2017-12-09}
\end{equation}
for any $|\xi^{(3)} - \xi_1^{(3)}| \leq  N_1 A^{-1}$ and $|\xi_1^{(3)}| \leq  N_1 A^{-1}$. Indeed, once  \eqref{2017-12-09} holds, 
from Minkowski's inequality and Young's inequality, we have
\begin{align*}
&  \left\|  \psi_{N_3, L_3}^{\sigma_3}(\ta,\xi) \int_{\R^4}  
f_1 (\tau_1,\xi_1) f_2 (\tau-\tau_1, \xi-\xi_1)d\tau_1d\xi_1 \right\|_{L_{\tau \xi}^2} \\
= \ & \left\|  \psi_{N_3, L_3}^{\sigma_3}(\ta,\xi) 
\int_{\R^4}  f_1 (\ta_1, \tilde{\xi_1}, \xi_1^{(3)})  \, 
f_2 (\ta - \ta_1, \tilde{\xi} - \tilde{\xi_1}, \xi^{(3)} - \xi_1^{(3)}) d\ta_1 d \xi_1 \right\|_{L_{\tau \xi}^2}\\
\lesssim \ & \left\| \int_\R \left\|  \psi_{N_3, L_3}^{\sigma_3}  \int_{\R^3} 
f_1 (\ta_1, \tilde{\xi_1}, \xi_1^{(3)}) \, 
f_2 (\ta - \ta_1, \tilde{\xi} - \tilde{\xi_1}, \xi^{(3)} - \xi_1^{(3)}) d\ta_1 d \tilde{\xi_1} \right\|_{L_{\tau \tilde{\xi}}^2}  
d\xi_1^{(3)} 
\right\|_{L_{\xi^{(3)} }^2}\\
 \underset{\eqref{2017-12-09}}{\lesssim} & A^{-\frac{1}{2}}
(L_1 L_2)^{\frac{1}{2}} \left\| \int_\R \|f_1  ( \xi_1^{(3)}) \|_{L_{\tau \tilde{\xi}}^2} 
\|f_2 (\xi^{(3)} - \xi_1^{(3)}) \|_{L_{\tau \tilde{\xi}}^2} d\xi_1^{(3)} 
\right\|_{L_{\xi^{(3)} }^2}\\
\lesssim \ &  A^{-1} N_1^{\frac{1}{2}} 
(L_1 L_2)^{\frac{1}{2}}\sup_{\xi^{(3)}} \int_\R \|f_1  ( \xi_1^{(3)} ) \|_{L_{\tau \tilde{\xi}}^2} 
\|f_2 (\xi^{(3)} - \xi_1^{(3)}) \|_{L_{\tau \tilde{\xi}}^2} d\xi_1^{(3)} \\
\lesssim \ &  A^{- 1} N_1^{\frac{1}{2}} ( L_1 L_2 )^{\frac{1}{2}} \|f_1 \|_{ L_{\tau \xi}^2} 
\|f_2 \|_{ L_{\ta \xi}^2}.
\end{align*}
Since the estimate \eqref{2017-12-09} can be verified by the same proof as that of Theorem \ref{thm-0.3}, we omit the details.
\end{proof}
\begin{prop}\label{thm2.6-3d}
Let $L_{\textnormal{max}} \ll |\theta| N_{\min}^2$
, $ 64 \leq A \leq N_{\textnormal{max}}$ and 
$\alpha (j_1,j_2) \lesssim A^{-1}$. 
Then the following estimate holds:
\begin{equation}
\begin{split}
\|Q_{L_3}^{-\sigma_3} P_{N_3}(R_{j_1}^A Q_{L_1}^{\sigma_1}P_{N_1}u_{1}\cdot 
R_{j_2}^A Q_{L_2}^{\sigma_2}P_{N_2}u_{2})\|_{L^{2}_{tx}} & \\
\lesssim  N_1^{-\frac{1}{2}} L_1^{\frac{1}{2}}L_2^{\frac{1}{2}} L_3^{\frac{1}{2}} 
\|R_{j_1}^A Q_{L_1}^{\sigma_1}P_{N_1}u_{1}\|_{L^2_{tx}} &  \|R_{j_2}^A Q_{L_2}^{\sigma_2}P_{N_2}u_{2}\|_{L^2_{tx}}.
\end{split}\label{0609-3d}
\end{equation}
\end{prop}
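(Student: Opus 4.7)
The plan is to reduce the $3$D estimate to the already-proven $2$D estimate of Proposition~\ref{thm2.6} using the slicing technique employed in the proof of Theorem~\ref{thm-0.3-3d}. After applying a rotation in space together with a harmless decomposition, we may assume both $f_i := \F_{tx}[R_{j_i}^A Q_{L_i}^{\sigma_i} P_{N_i} u_i]$ ($i=1,2$) are supported in the slab $\Sigma_3(N_1 A^{-1}) = \{(\tau,\xi)\in \R\times\R^3 : |\xi^{(3)}|\leq N_1 A^{-1}\}$. Writing $\xi = (\tilde\xi, \xi^{(3)})$ with $\tilde\xi \in \R^2$, Plancherel's theorem expresses the left-hand side of \eqref{0609-3d} as an iterated $L^2_{\tau\tilde\xi} L^2_{\xi^{(3)}}$ norm of the $(\tau_1,\tilde{\xi}_1,\xi_1^{(3)})$-convolution of $f_1, f_2$.

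Next, applying Minkowski's inequality in the outer $L^2_{\xi^{(3)}}$, Young's inequality $L^1_{\xi_1^{(3)}} \ast L^2 \hookrightarrow L^2$, and the Cauchy--Schwarz bound $\|\cdot\|_{L^1_{\xi_1^{(3)}}} \leq (N_1 A^{-1})^{1/2}\|\cdot\|_{L^2_{\xi_1^{(3)}}}$ coming from the support constraint $|\xi_1^{(3)}| \lesssim N_1 A^{-1}$, the problem reduces, uniformly in $\xi_1^{(3)}$ and $\xi^{(3)}-\xi_1^{(3)}$, to the $2$D bilinear estimate
\begin{equation*}
\| P_{N_3} Q_{L_3}^{-\sigma_3}(F_1 \cdot F_2) \|_{L^2_{t \tilde{x}}} \lesssim A^{1/2} N_1^{-1} (L_1 L_2 L_3)^{1/2} \|F_1\|_{L^2_{t\tilde{x}}} \|F_2\|_{L^2_{t\tilde{x}}},
\end{equation*}
where $F_1, F_2$ are the $2$D profiles obtained by freezing the $\xi^{(3)}$-components. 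This is precisely the content of Proposition~\ref{thm2.6}. Multiplying the gain $A^{1/2}N_1^{-1}$ from Proposition~\ref{thm2.6} by the factor $(N_1 A^{-1})^{1/2}$ produced by the Young/support step yields $N_1^{-1/2}$, which matches the power appearing in \eqref{0609-3d}.

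The main obstacle is verifying that the hypotheses of Proposition~\ref{thm2.6} are satisfied at each slice. By Lemma~\ref{modul_est_2}, the assumption $L_{\max} \ll |\theta| N_{\min}^2$ forces $N_1 \sim N_2 \sim N_3$, so the $2$D frequencies at each slice remain of size $\sim N_1$ and the low-modulation condition $L_{\max} \ll |\theta|N_{\min}^2$ persists upon restriction. The spatial rotation is chosen so that the $\mathbb{S}^2$-caps $\omega_A^{j_1}, \omega_A^{j_2}$, at angular distance $\sim A^{-1}$, project onto $\mathbb{S}^1$-arcs of separation $\sim A^{-1}$ in the $\tilde\xi$-plane; after relabeling the $2$D angular indices this is exactly the separation condition $16 \leq |j_1' - j_2'| \leq 32$ required by Proposition~\ref{thm2.6}. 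Since the coefficient condition $(\sigma_1+\sigma_2)(\sigma_2+\sigma_3)(\sigma_3+\sigma_1)\neq 0$ is independent of dimension, all hypotheses of Proposition~\ref{thm2.6} are met on the slice, and the slicing argument concludes \eqref{0609-3d}.
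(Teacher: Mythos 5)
Your proposal follows the paper's proof essentially verbatim: the paper likewise rotates so that both angular caps lie in the slab $\Sigma_3(N_1A^{-1})$, proves the fixed-$\xi^{(3)}$ slice estimate \eqref{2017-12-12} with constant $A^{\frac12}N_1^{-1}(L_1L_2L_3)^{\frac12}$ by the same argument as Proposition~\ref{thm2.6}, and then recombines the slices via Minkowski, Cauchy--Schwarz/Young and the support constraint $|\xi_1^{(3)}|\lesssim N_1A^{-1}$, which converts $A^{\frac12}N_1^{-1}$ into the stated $N_1^{-\frac12}$. The only cosmetic difference is that the paper invokes ``the same argument as in the proof of Proposition~\ref{thm2.6}'' rather than the proposition itself (the sliced paraboloids carry the harmless shifts $\sigma_i(\xi_i^{(3)})^2$ and the angular separation must be read off from the projected caps), points which your remarks already acknowledge.
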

\begin{proof}
We use the same notations as in the proof of Theorem \ref{thm-0.3-3d}. Applying Fourier transform, we rewrite \eqref{0609-3d} as
\begin{equation}
\begin{split}
 \left\| \psi_{N_3, L_3}^{\sigma_3}(\ta,\xi) \int 
f_1 (\tau_1,\xi_1) f_2 (\tau-\tau_1, \xi-\xi_1)d\tau_1d\xi_1 \right\|_{L_{\tau \xi}^2} & \\ 
\lesssim 
A^{-1} N_1^{\frac{1}{2}} L_1^{\frac{1}{2}}L_2^{\frac{1}{2}} 
\|f_1\|_{L^2_{\tau \xi}} & \|f_2 \|_{L^2_{\tau \xi}}.
\end{split}\label{bilinear-25-3d}
\end{equation}
Similarly to the proof of Theorem \ref{thm-0.3-3d}, we assume that the supports of $f$ and $g$ are both contained in the slab $\Sigma_3  (N_1 A^{-1})$. 
Thanks to $|\xi|^2 = |\tilde{\xi}|^2 + (\xi^{(3)})^2$, applying the same argument as in the proof of Theorem \ref{thm2.6}, we may obtain
\begin{equation}
\begin{split}
& \left\| \int \psi_{N_3, L_3}^{\sigma_3}(\ta,\xi) f_1(\ta_1, \tilde{\xi_1}, (\xi_1)_3) 
f_2 (\tau - \tau_1, \tilde{\xi} - \tilde{\xi_1}, \xi^{(3)} - \xi_1^{(3)} ) d\ta_1  
d\tilde{\xi_1}  \right\|_{L_{\tau \tilde{\xi}}^2}  \\
& \qquad \qquad \qquad \qquad \qquad  \lesssim 
A^{\frac{1}{2}} N_1^{-1} (L_0 L_1 L_2)^\frac{1}{2}
  \|f_1 (\xi_1^{(3)}) \|_{L_{\tau \tilde{\xi}}^2}  
\|f_2 (\xi^{(3)}- \xi_1^{(3)}) \|_{L_{\tau \tilde{\xi}}^2}
\end{split}\label{2017-12-12}
\end{equation}
for any $|\xi^{(3)} - \xi_1^{(3)}| \leq  N_1 A^{-1}$ and $|\xi_1^{(3)}| \leq  N_1 A^{-1}$. 
To avoid redundancy, we omit the proof of \eqref{2017-12-12}. 
From \eqref{2017-12-12} and Minkowski's inequality, we get
\begin{align*}
&  \left\| \psi_{N_3, L_3}^{\sigma_3}(\ta,\xi) \int 
f_1 (\tau_1,\xi_1) f_2 (\tau-\tau_1, \xi-\xi_1)d\tau_1d\xi_1 \right\|_{L_{\tau \xi}^2} \\
\lesssim \ & \left\| \int_\R \left\|  \psi_{N_3, L_3}^{\sigma_3}  \int_{\R^3} 
f_1 (\ta_1, \tilde{\xi_1}, \xi_1^{(3)}) \, 
f_2 (\ta - \ta_1, \tilde{\xi} - \tilde{\xi_1}, \xi^{(3)} - \xi_1^{(3)}) d\ta_1 d \tilde{\xi_1} \right\|_{L_{\tau \tilde{\xi}}^2}  
d\xi_1^{(3)} 
\right\|_{L_{\xi^{(3)} }^2}\\
 \underset{\eqref{2017-12-12}}{\lesssim} & A^{\frac{1}{2}} N_1^{-1} 
(L_1 L_2 L_3)^{\frac{1}{2}} \left\| \int_\R \|f_1  ( \xi_1^{(3)}) \|_{L_{\tau \tilde{\xi}}^2} 
\|f_2 (\xi^{(3)} - \xi_1^{(3)}) \|_{L_{\tau \tilde{\xi}}^2} d\xi_1^{(3)} 
\right\|_{L_{\xi^{(3)} }^2}\\
\lesssim \ & N_1^{-\frac{1}{2}}   
(L_1 L_2 L_3)^{\frac{1}{2}} 
 \sup_{\xi^{(3)}} \int_\R \|f_1 ( \xi_1^{(3)} ) \|_{L_{\tau \tilde{\xi}}^2} 
\|f_2 (\xi^{(3)} - \xi_1^{(3)}) \|_{L_{\tau \tilde{\xi}}^2} d\xi_1^{(3)} \\
\lesssim \ &  N_1^{-\frac{1}{2}} 
(L_1 L_2 L_3)^{\frac{1}{2}} \|f_1 \|_{ L_{\tau \xi}^2} 
\|f_2 \|_{ L_{\ta \xi}^2}
\end{align*}
which completes the proof of \eqref{bilinear-25-3d}.
\end{proof}
\subsection{Proof of Proposition~\ref{key_be}}
We now prove the key estimate Proposition \ref{key_be}. 
\begin{proof}[Proof of Proposition~\ref{key_be}]
By the duality argument, it suffice to show that
\[
\begin{split}
&\sum_{N_1,N_2,N_3\ge 1}\sum_{L_1,L_2, L_3 \ge 1} N_{\max}
\left|\int u_{N_1,L_1}v_{N_2,L_2}w_{N_3,L_3}dxdt\right|\\
&\lesssim 
\|u\|_{X^{s,b',1}_{\sigma_1}} \|v\|_{X^{s,b',1}_{\sigma_2}} \|w\|_{X^{-s, b', 1}_{\sigma_3}}
\end{split}
\]
for the scalar functions $u$, $v$, and $w$, where we put
\[
u_{N_1,L_1}:=Q_{L_1}^{\sigma_1}P_{N_1}u,\ 
v_{N_2,L_2}:=Q_{L_2}^{\sigma_2}P_{N_2}v,\ 
w_{N_3,L_3}:=Q_{L_3}^{\sigma_3}P_{N_3}w
\]
and used $(Q_{L_3}^{-\sigma_3}f,\overline{g})_{L^2_{tx}}=(f,\overline{Q_{L_3}^{\sigma_3}g})_{L^2_{tx}}$. 
By Plancherel's theorem, we have
\[
\begin{split}
&\left|\int u_{N_1,L_1}v_{N_2,L_2}w_{N_3,L_3}dxdt\right|\\
&\sim 
\left|\int_{\substack{\xi_1+\xi_2+\xi_3=0\\ \tau_1+\tau_2+\tau_3=0}}
\F_{tx}[u_{N_1,L_1}](\tau_1,\xi_1)\F_{tx}[v_{N_2,L_2}](\tau_2,\xi_2)\F_{tx}[w_{N_3,L_3}](\tau_3,\xi_3)\right|.
\end{split}
\]
Thus, it is clear that we only need to consider the following three cases:
\begin{equation*}
\textnormal{(I)} \ N_1 \lesssim N_2 \sim N_3, \quad 
\textnormal{(I\hspace{-0.2mm}I)} \ N_2 \lesssim N_3 \sim N_1, \quad 
\textnormal{(I\hspace{-0.2mm}I\hspace{-0.2mm}I)} \ N_3 \lesssim N_1 \sim N_2.
\end{equation*}
To avoid redundancy, we only consider the first case. The other two can be shown similarly. 
It suffices to show that
\begin{equation}\label{desired_est}
\begin{split}
&N_{2}
\left|\int u_{N_1,L_1}v_{N_2,L_2}w_{N_3,L_3}dxdt\right|\\
&\lesssim
\left( \frac{N_{1}}{N_{2}} \right)^{\epsilon}N_{1}^s(L_1L_2L_3)^{b'}
\|u_{N_1,L_1}\|_{L^2_{tx}}\|v_{N_2,L_2}\|_{L^2_{tx}}\|w_{N_3,L_3}\|_{L^2_{tx}}
\end{split}
\end{equation}
for some $b'\in (0,\frac{1}{2})$ and $\epsilon >0$. 
Indeed, from (\ref{desired_est}) and the Cauchy-Schwartz inequality, 
we obtain 
\[
\begin{split}
&
\sum_{N_1 \lesssim N_2 \sim N_3}\sum_{L_1,L_2, L_3 \ge 1} N_{2}
\left|\int u_{N_1,L_1}v_{N_2,L_2}w_{N_3,L_3}dxdt\right|\\
&\lesssim 
 \sum_{N_1 \lesssim N_2 \sim N_3}\sum_{L_1,L_2, L_3\ge 1}
\left( \frac{N_{1}}{N_{2}} \right)^{\epsilon}N_{1}^s(L_1L_2L_3)^{b'}
\|u_{N_1,L_1}\|_{L^2_{tx}}\|v_{N_2,L_2}\|_{L^2_{tx}}\|w_{N_3,L_3}\|_{L^2_{tx}}\\
&\lesssim  \sum_{N_3}\sum_{ N_2 \sim N_3}
\left( \sum_{N_1\lesssim N_2}N_1^{s+\e} N_2^{-\e} \sum_{L_1 \ge 1}L_1^{b'}\|u_{N_1,L_1}\|_{L^2_{tx}}\right) 
\sum_{L_2\ge 1}L_2^{b'}\|v_{N_2,L_2}\|_{L^2_{tx}}\sum_{L_3\ge 1}L_3^{b'}\|w_{N_3,L_3}\|_{L^2_{tx}} \\
&\lesssim \|u\|_{X^{s,b',1}_{\sigma_1}} 
\sum_{N_3}\sum_{ N_2 \sim N_3} \left( N_2^{s} \sum_{L_2\ge 1}L_2^{b'}\|v_{N_2,L_2}\|_{L^2_{tx}} \right) 
\left( N_3^{-s}  \sum_{L_3\ge 1}L_3^{b'}\|w_{N_3,L_3}\|_{L^2_{tx}} \right)\\
& \lesssim \|u\|_{X^{s,b',1}_{\sigma_1}} \|v\|_{X^{s,b',1}_{\sigma_2}} \|w\|_{X^{-s, b', 1}_{\sigma_3}}
\end{split}
\]
Hence, we focus on (\ref{desired_est}) for $N_1\lesssim N_2\sim N_3$. 
We put 
$\displaystyle L_{\max} := \max_{1\leq j\leq 3} (L_1, L_2, L_3)$.\\
\kuuhaku \\
\underline{Case\ 1:\ High modulation, $\displaystyle L_{\max}\gtrsim N_{\max}^2$}

We assume $L_1\gtrsim N_{\max}^2\sim N_2^2$. 
By the Cauchy-Schwartz inequality and (\ref{L2be_est_2}), 
we have
\[
\begin{split}
&\left|\int u_{N_1,L_1}v_{N_2,L_2}w_{N_3,L_3}dxdt\right|\\
&\lesssim \|u_{N_1,L_1}\|_{L^2_{tx}}\|P_{N_1}(v_{N_2,L_2}w_{N_3,L_3})\|_{L^2_{tx}}\\
&\lesssim N_1^{\frac{d}{2}-1+4\delta}
\left(\frac{N_1}{N_2}\right)^{\frac{1}{2}- 2\delta}L_2^{b'}L_3^{b'}
\|u_{N_1,L_1}\|_{L^2_{tx}}\|v_{N_2,L_2}\|_{L^2_{tx}}\|w_{N_3,L_3}\|_{L^2_{tx}}.
\end{split}
\]
Therefore, we obtain
\[
\begin{split}
&N_{2}
\left|\int u_{N_1,L_1}v_{N_2,L_2}w_{N_3,L_3}dxdt\right|\\
&\lesssim N_1^{\frac{d-1}{2}+2\delta} N_2^{\frac{1}{2}-2b'+2\delta}
(L_1L_2L_3)^{b'}
\|u_{N_1,L_1}\|_{L^2_{tx}}\|v_{N_2,L_2}\|_{L^2_{tx}}\|w_{N_3,L_3}\|_{L^2_{tx}}.
\end{split}
\]
Thus, it suffices to show that
\begin{equation}\label{wait_esti}
N_1^{\frac{d-1}{2}+2\delta} N_2^{\frac{1}{2}-2b'+2\delta}
\lesssim \left( \frac{N_{1}}{N_{2}} \right)^{\epsilon}N_{1}^s
\end{equation}
for some $\epsilon >0$. Since $\delta =\frac{1}{2}-b'$, we have
\[
\begin{split}
N_1^{\frac{d-1}{2}+2\delta} N_2^{\frac{1}{2}-2b'+2\delta}
&=N_1^{\frac{d+1}{2}- 2 b'} N_2^{\frac{3}{2}- 4b'} \\
&\sim N_1^{\frac{d+4}{2}- 6 b' -s}
\left( \frac{N_{1}}{N_{2}} \right)^{4b' - \frac{3}{2} }N_{1}^s
\end{split}
\]
If $d=2$, then we obtain
\[
\begin{split}
 N_1^{\frac{d+4}{2}- 6 b' -s}
\le N_1^{-6(b'-\frac{5}{12})}
\end{split}
\]
for $s\ge \frac{1}{2}$. 
Therefore, by choosing $b'\in [\frac{5}{12}, \frac{1}{2})$, 
we get (\ref{wait_esti}). 
While if $d=3$, then we obtain 
\[
\begin{split}
 N_1^{\frac{d+4}{2}- 6 b' -s}
= N_1^{-\left( \left( s-\frac{1}{2} \right) -6 \left( \frac{1}{2} - b' \right) \right) }.
\end{split}
\]
Therefore, by choosing $b'\in (\frac{1}{2}-\frac{1}{6}(s-\frac{1}{2}),\frac{1}{2})$ 
for $s>\frac{1}{2}$, we get (\ref{wait_esti}). 

The proofs for the cases $L_2\gtrsim N_{\max}^2$ and $L_3\gtrsim N_{\max}^2$ are quite same. We omit them.\\
\kuuhaku \\
\underline{Case\ 2:\ Low modulation, $\displaystyle L_{\max}\ll N_{\max}^2$}

By Lemma \ref{modul_est_2}, we can assume $N_1 \sim N_2 \sim N_3$ thanks to $\displaystyle L_{\max}\ll N_{\max}^2$.\\
\noindent\textbf{$\circ 2$D}\\
We first consider $2$D case. Let $M_0 := (L_{\max})^{-\frac{1}{2}} N_{\max}\sim (L_{\max})^{-\frac{1}{2}} N_{1}$. We decompose $\R^3 \cross \R^3$ as follows:
\begin{equation*}
\R^3 \cross \R^3 =  \bigcup_{\tiny{\substack{0 \leq j_1,j_2 \leq M_0 -1\\|j_1 - j_2|\leq 16}}} 
{\mathfrak{D}}_{j_1}^{M_0} \cross {\mathfrak{D}}_{j_2}^{M_0} \cup \bigcup_{64 \leq A \leq M_0} \ \bigcup_{\tiny{\substack{0 \leq j_1,j_2 \leq A -1\\ 16 \leq |j_1 - j_2|\leq 32}}} 
{\mathfrak{D}}_{j_1}^A \cross {\mathfrak{D}}_{j_2}^A.
\end{equation*}
We can write
\begin{align*}
 \left|\int u_{N_1,L_1}v_{N_2,L_2}w_{N_3,L_3}dxdt\right| &
 \leq \sum_{{\tiny{\substack{A=M_0\\-M_0 \leq j_1,j_2 \leq M_0 -1\\|j_1 - j_2|\leq 16}}}} 
\left|\int u_{N_1,L_1, j_1}v_{N_2,L_2, j_2}w_{N_3,L_3}dxdt\right| \\ & + 
\sum_{64 \leq A \leq M_0} \sum_{{\tiny{\substack{-A \leq j_1,j_2 \leq A-1\\|j_1 - j_2|\leq 16}}}} 
\left|\int u_{N_1,L_1, j_1}v_{N_2,L_2, j_2}w_{N_3,L_3}dxdt\right| 
\end{align*}
with $u_{N_1,L_1, j_1} := R_{j_1}^A u_{N_1, L_1}$ and 
$v_{N_2,L_2, j_2} := R_{j_2}^A v_{N_2, L_2}$. 
We assume $L_{\textnormal{max}} = L_3$ for 
simplicity. The other cases can be treated similarly. For the former term, by using the H\"older inequality and Theorem \ref{thm-0.3}, we get
\begin{align*}
& \sum_{{\tiny{\substack{-M_0 \leq j_1,j_2 \leq M_0 -1\\|j_1 - j_2|\leq 16}}}} 
\left|\int u_{N_1,L_1, j_1}v_{N_2,L_2, j_2}w_{N_3,L_3}dxdt\right| \\
& \lesssim  \|w_{N_3,L_3} \|_{{L^2_{t x}}}  
\sum_{{\tiny{\substack{-M_0 \leq j_1,j_2 \leq M_0 -1\\|j_1 - j_2|\leq 16}}}}  
\|Q_{L_3}^{-\sigma_3} P_{N_3} ( u_{N_1,L_1, j_1} v_{N_2,L_2, j_2})\|_{L^{2}_{tx}}\\
& \lesssim 
\left(L_3^{-\frac{1}{2}} N_1 \right)^{-\frac{1}{2}} L_1^{\frac{1}{2}}L_2^{\frac{1}{2}} \|w_{N_3,L_3} \|_{{L^2_{t x}}}
\sum_{{\tiny{\substack{-M_0 \leq j_1,j_2 \leq M_0 -1\\|j_1 - j_2|\leq 16}}}} 
\|u_{N_1,L_1, j_1}\|_{L^2_{tx}}   \|v_{N_2,L_2, j_2}\|_{L^2_{tx}}\\
& \lesssim  N_1^{-\frac{1}{2}} ( L_1 L_2 L_3)^{\frac{5}{12}} \|u_{N_1,L_1}\|_{L^2_{tx}}\|v_{N_2,L_2}\|_{L^2_{tx}}\|w_{N_3,L_3}\|_{L^2_{tx}}.
\end{align*}
For the latter term, it follows from Proposition \ref{thm2.6} that we get
\begin{align*}
& \sum_{64 \leq A \leq M_0} \sum_{{\tiny{\substack{-A \leq j_1,j_2 \leq A-1\\|j_1 - j_2|\leq 16}}}} 
\left|\int u_{N_1,L_1, j_1}v_{N_2,L_2, j_2}w_{N_3,L_3}dxdt\right| \\
& \lesssim   \|w_{N_3,L_3} \|_{{L^2_{t x}}} 
\sum_{64 \leq A \leq M_0} \sum_{{\tiny{\substack{-A \leq j_1,j_2 \leq A-1\\|j_1 - j_2|\leq 16}}}} 
\|Q_{L_3}^{-\sigma_3} P_{N_3} ( u_{N_1,L_1, j_1} v_{N_2,L_2, j_2})\|_{L^{2}_{tx}}\\
& \lesssim \|w_{N_3,L_3} \|_{{L^2_{t x}}}
\sum_{64 \leq A \leq M_0}  
A^{\frac{1}{2}} N_1^{-1} ( L_1 L_2 L_3)^\frac{1}{2}
 \sum_{{\tiny{\substack{-A \leq j_1,j_2 \leq A-1\\|j_1 - j_2|\leq 16}}}}  
\|u_{N_1,L_1, j_1}\|_{L^2_{tx}}   \|v_{N_2,L_2, j_2}\|_{L^2_{tx}}\\
&  \lesssim  N_1^{-\frac{1}{2}} ( L_1 L_2 L_3)^{\frac{5}{12}} \|u_{N_1,L_1}\|_{L^2_{tx}}\|v_{N_2,L_2}\|_{L^2_{tx}}\|w_{N_3,L_3}\|_{L^2_{tx}}.
\end{align*}
The above two estimates give the desired estimate \eqref{desired_est}.\\
\noindent\textbf{$\circ 3$D}\\
Next, we consider $3$D case. The proof is almost the same as that for $2$D. We use Theorem  \ref{thm-0.3-3d} and Proposition \ref{thm2.6-3d} instead of 
Theorem  \ref{thm-0.3} and Proposition \ref{thm2.6}, respectively. 
Similarly to $2$D, we decompose $\R^4 \cross \R^4$ as follows:
\begin{equation*}
\R^4 \cross \R^4 = \bigcup_{\tiny{\substack{j_1,j_2 \in \Omega_{N_1} \\ \alpha (j_1,j_2)\sim N_1^{-1}}}} 
{{\mathfrak{D}}_{j_1}}^{N_1} \cross {{\mathfrak{D}}_{j_2}}^{N_1} \cup \bigcup_{64 \leq A \leq N_1} \ 
\bigcup_{\tiny{\substack{ j_1,j_2 \in \Omega_A \\ \alpha (j_1,j_2) \sim A^{-1}}}} 
{{\mathfrak{D}}_{j_1}}^A \cross {{\mathfrak{D}}_{j_2}}^A.
\end{equation*}
We can write
\begin{align*}
 \left|\int u_{N_1,L_1}v_{N_2,L_2}w_{N_3,L_3}dxdt\right| &
 \leq \sum_{{\tiny{\substack{A=N_1\\j_1,j_2 \in \Omega_{N_1} \\ \alpha (j_1,j_2)\sim N_1^{-1}}}}}  
\left|\int u_{N_1,L_1, j_1}v_{N_2,L_2, j_2}w_{N_3,L_3}dxdt\right| \\ & + 
\sum_{64 \leq A \leq N_1} \sum_{{\tiny{\substack{ j_1,j_2 \in \Omega_A \\ \alpha (j_1,j_2) \sim A^{-1}}}}} 
\left|\int u_{N_1,L_1, j_1}v_{N_2,L_2, j_2}w_{N_3,L_3}dxdt\right| 
\end{align*}
with $u_{N_1,L_1, j_1} := R_{j_1}^A u_{N_1, L_1}$ and 
$v_{N_2,L_2, j_2} := R_{j_2}^A v_{N_2, L_2}$. 
We assume $L_{\textnormal{max}} = L_3$ for 
simplicity. 
For the former term, by using the H\"older inequality and Theorem \ref{thm-0.3}, we get
\begin{align*}
& \sum_{{\tiny{\substack{j_1,j_2 \in \Omega_{N_1} \\ \alpha (j_1,j_2)\sim N_1^{-1}}}}} 
\left|\int u_{N_1,L_1, j_1}v_{N_2,L_2, j_2}w_{N_3,L_3}dxdt\right| \\
& \lesssim  \|w_{N_3,L_3} \|_{{L^2_{t x}}}  
\sum_{{\tiny{\substack{j_1,j_2 \in \Omega_{N_1}\\ \alpha (j_1,j_2)\sim N_1^{-1}}}}}  
\|Q_{L_3}^{-\sigma_3} P_{N_3} ( u_{N_1,L_1, j_1} v_{N_2,L_2, j_2})\|_{L^{2}_{tx}}\\
& \lesssim 
 N_1^{-\frac{1}{2}} L_1^{\frac{1}{2}}L_2^{\frac{1}{2}} \|w_{N_3,L_3} \|_{{L^2_{t x}}}
\sum_{{\tiny{\substack{j_1,j_2 \in \Omega_{N_1}\\ \alpha (j_1,j_2)\sim N_1^{-1}}}}} 
\|u_{N_1,L_1, j_1}\|_{L^2_{tx}}   \|v_{N_2,L_2, j_2}\|_{L^2_{tx}}\\
& \lesssim  N_1^{-\frac{1}{2}} ( L_1 L_2 L_3)^{\frac{1}{3}} \|u_{N_1,L_1}\|_{L^2_{tx}}\|v_{N_2,L_2}\|_{L^2_{tx}}\|w_{N_3,L_3}\|_{L^2_{tx}}.
\end{align*}
For the latter term, it follows from Proposition \ref{thm2.6} and $\displaystyle L_{\max}\ll N_{1}^2$ that we get
\begin{align*}
& \sum_{64 \leq A \leq N_1} \sum_{{\tiny{\substack{ j_1,j_2 \in \Omega_A \\ \alpha (j_1,j_2) \sim A^{-1}}}}} 
\left|\int u_{N_1,L_1, j_1}v_{N_2,L_2, j_2}w_{N_3,L_3}dxdt\right| \\
& \lesssim   \|w_{N_3,L_3} \|_{{L^2_{t x}}} 
\sum_{64 \leq A \leq N_1} \sum_{{\tiny{\substack{ j_1,j_2 \in \Omega_A \\ \alpha (j_1,j_2) \sim A^{-1}}}}} 
\|Q_{L_3}^{-\sigma_3} P_{N_3} ( u_{N_1,L_1, j_1} v_{N_2,L_2, j_2})\|_{L^{2}_{tx}}\\
& \lesssim \|w_{N_3,L_3} \|_{{L^2_{t x}}}
\sum_{64 \leq A \leq N_1}  
N_1^{-\frac{1}{2}} ( L_1 L_2 L_3)^\frac{1}{2}
\sum_{{\tiny{\substack{ j_1,j_2 \in \Omega_A \\ \alpha (j_1,j_2) \sim A^{-1}}}}}   
\|u_{N_1,L_1, j_1}\|_{L^2_{tx}}   \|v_{N_2,L_2, j_2}\|_{L^2_{tx}}\\
&  \lesssim  (\log{N_1}) N_1^{-\frac{1}{2}} ( L_1 L_2 L_3)^{\frac{1}{2}} \|u_{N_1,L_1}\|_{L^2_{tx}}\|v_{N_2,L_2}\|_{L^2_{tx}}\|w_{N_3,L_3}\|_{L^2_{tx}}\\
&  \lesssim   N_1^{s-1} (L_1 L_2 L_3)^{\frac{1}{2}-\frac{1}{10} \left( s-\frac{1}{2} \right)} \|u_{N_1,L_1}\|_{L^2_{tx}}\|v_{N_2,L_2}\|_{L^2_{tx}}\|w_{N_3,L_3}\|_{L^2_{tx}}.
\end{align*}
This completes the proof of \eqref{desired_est}.
\end{proof}
%
%
%
%
%
\section{Proof of the well-posedness}
In this section, we prove Theorems~\ref{wellposed_1}. 
First, we give the linear estimate. 
\begin{prop}\label{linear_est}
Let $s\in \R$, $\sigma\in \R\backslash \{0\}$, $b'\in (0,\frac{1}{2})$ and 
$0<T\le 1$. 
\begin{enumerate}
\item[(1)] There exists $C_1>0$ such that for any $\varphi \in H^s$, we have
\[
\|e^{it\sigma \Delta}\varphi\|_{X^{s,\frac{1}{2},1}_{\sigma,T}}
\le C_1\|\varphi\|_{H^s}.
\]
\item[(2)] There exists $C_2>0$ such that for any $F \in X^{s,-b',\infty}_{\sigma,T}$, we have
\[
\left\|\int_{0}^{t}e^{i(t-t')\sigma \Delta}F(t')dt'\right\|_{X^{s,\frac{1}{2},1}_{\sigma,T}}
\le C_2T^{\frac{1}{2}-b'}\|F\|_{X^{s,-b',\infty}_{\sigma,T}}.
\]
\item[(3)] There exists $C_3>0$ such that for any $u \in X^{s,\frac{1}{2},\infty}_{\sigma,T}$, we have
\[
\|u\|_{X^{s,b',1}_{\sigma, T}}\le C_3T^{\frac{1}{2}-b'}\|u\|_{X^{s,\frac{1}{2},1}_{\sigma, T}}. 
\]
\end{enumerate}
\end{prop}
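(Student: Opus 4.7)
The plan is to reduce all three estimates to Fourier-side computations on the characteristic surface $\tau+\sigma|\xi|^2=0$, and then exploit Schwartz decay of a temporal cutoff to control the dyadic modulation sums. Fix a smooth, compactly supported temporal cutoff $\eta\in C_c^\infty(\R)$ equal to $1$ on $[-1,1]$, and set $\eta_T(t):=\eta(t/T)$. Since the norms on the right and left of each inequality depend on the restriction to $[0,T]$, we may pass to the extension $\eta_T u$ (respectively $\eta_T e^{it\sigma\Delta}\varphi$) without affecting the conclusion.

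For \textbf{(1)}, note that
\[
\F_{tx}\bigl[\eta_T(t)e^{it\sigma\Delta}\varphi\bigr](\tau,\xi)=T\,\widehat\eta\bigl(T(\tau+\sigma|\xi|^2)\bigr)\,\widehat\varphi(\xi),
\]
so by Plancherel
\[
\|Q_L^\sigma P_N(\eta_T e^{it\sigma\Delta}\varphi)\|_{L^2_{tx}}
=\|\psi_L(\cdot)\,T\widehat\eta(T\,\cdot\,)\|_{L^2_\tau}\|P_N\varphi\|_{L^2_x}.
\]
Because $\widehat\eta$ is Schwartz, the factor $\|\psi_L(\cdot)\,T\widehat\eta(T\,\cdot)\|_{L^2_\tau}$ is summable in $L$ with weight $L^{1/2}$ uniformly in $T\le 1$, yielding $\sum_L L^{1/2}\|Q_L^\sigma P_N(\cdots)\|_{L^2}\lesssim\|P_N\varphi\|_{L^2}$. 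Square-summing in $N$ with the $N^{2s}$ weight gives the claim.

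For \textbf{(2)}, I would decompose $F=F_1+F_2$ with $F_1:=Q_{<1/T}^\sigma F$ and $F_2:=Q_{\ge 1/T}^\sigma F$. On the high-modulation piece the Duhamel formula is written via spacetime Fourier as
\[
\int_0^t e^{i(t-t')\sigma\Delta}F_2(t')\,dt'
=\F^{-1}_{tx}\!\left[\frac{\widetilde{F_2}(\tau,\xi)-\widetilde{F_2}(-\sigma|\xi|^2,\xi)}{i(\tau+\sigma|\xi|^2)}\right]+e^{it\sigma\Delta}G,
\]
for an appropriate boundary profile $G$; the denominator gives the factor that upgrades $X^{s,-b',\infty}$ to $X^{s,1/2,1}$, and the loss is compensated by $T^{1/2-b'}$ coming from summing the dyadic shells $L\ge 1/T$ against the Schwartz decay of $\widehat{\eta_T}$. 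On $F_1$, Taylor-expand $e^{-it'\sigma\Delta}$ or use the bound $\|Q^\sigma_{<1/T}F\|_{L^1_tL^2_x}\lesssim T^{1/2}\|Q^\sigma_{<1/T}F\|_{L^2_{tx}}$ together with the energy estimate. After reinserting the $\eta_T$ cutoff and repeating the Plancherel computation from (1), the $T^{1/2-b'}$ loss emerges from $\|\psi_L(\tau)\widehat{\eta_T}(\tau)\|_{L^2_\tau}$ for each $L$, producing a convergent geometric series.

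For \textbf{(3)}, the standard route is: writing $u=\eta_T u$ on $[0,T]$, compute
\[
\|\eta_T Q_L^\sigma P_N u\|_{L^2_{tx}}\lesssim \sum_{L'}\|\psi_L(\cdot)T\widehat\eta(T(\cdot-\mu))\|_{L^\infty_\mu L^2_\tau}\cdot \|Q_{L'}^\sigma P_N u\|_{L^2_{tx}},
\]
and the Schwartz decay of $\widehat\eta$ gives, after summation, the factor $T^{1/2-b'}$ when moving from the $L^{1/2}$-weighted sum to the $L^{b'}$-weighted one. The main technical subtlety is that we are at the endpoint $b=1/2$ with the non-Banach-scale $\ell^1$ summation over modulations, so one cannot invoke duality or interpolation naively; instead, one works dyadic shell by dyadic shell, using the rapid decay of $\widehat\eta$ to absorb the logarithm that would otherwise appear. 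I expect this endpoint bookkeeping — particularly in (2), where the $X^{s,-b',\infty}$ dual side has an $\ell^1$-sup pairing — to be the main technical obstacle, but it is routine once the $L$-summation is arranged as above.
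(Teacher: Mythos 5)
Your sketch follows essentially the same route as the paper, which does not prove Proposition~\ref{linear_est} itself but defers to \cite{GTV97} (Lemma~2.1) and \cite{BHHT09} (Proposition~5.3), whose arguments are exactly what you describe: temporal cutoff $\eta_T$, Fourier-side computation on the surface $\tau+\sigma|\xi|^2=0$, modulation splitting at $1/T$, and Schwartz decay of $\widehat{\eta}$ to carry out the endpoint $\ell^1$ summation in $L$. One caution: in (2) the low-modulation piece must be treated by the Taylor-expansion route you mention rather than by the $L^1_tL^2_x$ bound plus the energy estimate, since the Duhamel operator does not map $L^1_tL^2_x$ into the endpoint space $X^{s,\frac{1}{2},1}_{\sigma}$ (the single-time kernel $\eta(t)\mathbf{1}_{t>s}e^{i(t-s)\sigma\Delta}$ already fails to lie in it, by a logarithmic divergence of the dyadic sum).
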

For the proof of (1) and (2), see Proposition\ 5.3 in \cite{BHHT09}
 (and also Lemma\ 2.1 in \cite{GTV97}). 
For the proof of (3), see Proposition\ 5.3 in \cite{BHHT09}. 

We define the map $\Phi(u,v,w)=(\Phi_{\alpha, u_{0}}^{(1)}(w, v), \Phi_{\beta, v_{0}}^{(1)}(\overline{w}, v), \Phi_{\gamma, w_{0}}^{(2)}(u, \overline{v}))$ as
\[
\begin{split}
\Phi_{\sigma, \varphi}^{(1)}(f,g)(t)&:=e^{it\sigma \Delta}\varphi 
-\int_{0}^{t}e^{i(t-t')\sigma \Delta}(\nabla \cdot f(t'))g(t')dt',\\
\Phi_{\sigma, \varphi}^{(2)}(f,g)(t)&:=e^{it\sigma \Delta}\varphi 
+\int_{0}^{t}e^{i(t-t')\sigma \Delta}\nabla (f(t')\cdot g(t'))dt'.
\end{split}
\] 
To prove the existence of the solution of (\ref{NLS_sys}), we prove that $\Phi$ is a contraction map 
on $B_R(X^{s,\frac{1}{2},1}_{\alpha,T}\times X^{s,\frac{1}{2},1}_{\beta,T}\times X^{s,\frac{1}{2},1}_{\gamma,T})$ for some $R>0$ and $T>0$.
\begin{proof}[\rm{\bf{Proof of Theorem~\ref{wellposed_1}.}}]
Let $(u_{0}$, $v_{0}$, $w_{0})\in B_{r}(H^s\times H^s\times H^s)$ be given. 
By Proposition~\ref{key_be} with $(\sigma_1,\sigma_2,\sigma_3)\in \{(\beta, \gamma, -\alpha), 
(-\gamma, \alpha, -\beta), (\alpha, -\beta, -\gamma)\}$ 
and Proposition~\ref{linear_est} with $\sigma \in \{\alpha, \beta, \gamma\}$, 
there exist $b'\in (0,\frac{1}{2})$ and constants $C_1$, $C_2$, $C_3>0$ such that for any $(u,v,w)\in B_R(X^{s,\frac{1}{2},1}_{\alpha,T}\times X^{s,\frac{1}{2},1}_{\beta,T}\times X^{s,\frac{1}{2},1}_{\gamma,T})$, 
we have
\[
\begin{split}
\|\Phi^{(1)}_{T,\alpha ,u_{0}}(w, v)\|_{X^{s,\frac{1}{2},1}_{\alpha,T}}&
\leq C_1\|u_{0}\|_{H^s} +CC_2C_3^2T^{1-2b'}\|w\|_{X^{s,\frac{1}{2},1}_{\gamma,T}}
\|v\|_{X^{s,\frac{1}{2},1}_{\beta,T}}\leq C_1r+CC_2C_3^2T^{1-2b'}R^2,\\
\|\Phi^{(1)}_{T,\beta ,v_{0}}(\overline{w}, u)\|_{X^{s,\frac{1}{2},1}_{\beta,T}}&
\leq C_1\|v_{0}\|_{H^s} +CC_2C_3^2T^{1-2b'}\|w\|_{X^{s,\frac{1}{2},1}_{\gamma,T}}
\|u\|_{X^{s,\frac{1}{2},1}_{\alpha,T}}\leq C_1r+CC_2C_3^2T^{1-2b'}R^2,\\
\|\Phi^{(2)}_{T,\gamma ,w_{0}}(u, \overline{v})\|_{X^{s,\frac{1}{2},1}_{\gamma,T}}&
\leq C_1\|w_{0}\|_{H^s} +CC_2C_3^2T^{1-2b'}\|u\|_{X^{s,\frac{1}{2},1}_{\alpha,T}}
\|v\|_{X^{s,\frac{1}{2},1}_{\beta,T}}\leq C_1r+CC_2C_3^2T^{1-2b'}R^2
\end{split}
\]
and
\[
\begin{split}
\|\Phi^{(1)}_{T,\alpha ,u_{0}}(w_{1}, v_{1})-\Phi^{(1)}_{T,\alpha ,u_{0}}(w_{2}, v_{2})\|_{X^{s,\frac{1}{2},1}_{\alpha,T}}
&\leq CC_2C_3^2T^{1-2b'}R\left( \|w_{1}-w_{2}\|_{X^{s,\frac{1}{2},1}_{\gamma,T}}+||v_{1}-v_{2}||_{X^{s,\frac{1}{2},1}_{\beta,T}}\right),\\
\|\Phi^{(1)}_{T,\beta ,v_{0}}(\overline{w_{1}}, u_{1})-\Phi^{(1)}_{T,\beta ,v_{0}}(\overline{w_{2}}, u_{2})\|_{X^{s,\frac{1}{2},1}_{\beta,T}}
&\leq CC_2C_3^2T^{1-2b'}R\left( \|w_{1}-w_{2}\|_{X^{s,\frac{1}{2},1}_{\gamma,T}}+\|u_{1}-u_{2}\|_{X^{s,\frac{1}{2},1}_{\alpha,T}}\right),\\
\|\Phi^{(2)}_{T,\gamma ,w_{0}}(u_{1}, \overline{v_{1}})-\Phi^{(2)}_{T,\gamma ,w_{0}}(u_{2}, \overline{v_{2}})\|_{X^{s,\frac{1}{2},1}_{\gamma,T}}
&\leq CC_2C_3^2T^{1-2b'}R\left( \|u_{1}-u_{2}\|_{X^{s,\frac{1}{2},1}_{\alpha,T}}+\|v_{1}-v_{2}\|_{X^{s,\frac{1}{2},1}_{\beta,T}}\right).
\end{split}
\]
Therefore if we choose $R>0$ and $T>0$ as
\[
R=6C_1r,\ CC_2C_3^2T^{1-2b'}R\le \frac{1}{4}
\]
then $\Phi$ is a contraction map on $B_R(X^{s,\frac{1}{2},1}_{\alpha,T}\times X^{s,\frac{1}{2},1}_{\beta,T}\times X^{s,\frac{1}{2},1}_{\gamma,T})$. 
This implies the existence of the solution of the system (\ref{NLS_sys}) and the uniqueness in the ball $B_R(X^{s,\frac{1}{2},1}_{\alpha,T}\times X^{s,\frac{1}{2},1}_{\beta,T}\times X^{s,\frac{1}{2},1}_{\gamma,T})$. 
The Lipschitz continuously of the flow map is also proved by similar argument. 
\end{proof} 
\section*{acknowledgements}
The first author is financially supported by JSPS KAKENHI Grant Number 17K14220
and Program to Disseminate Tenure Tracking System from the Ministry of Education, Culture, Sports, Science and Technology. The second author is supported by Grant-in-Aid for JSPS Research Fellow 16J11453.


\begin{thebibliography}{10}
%
%
%
\bibitem{BH11}I. Bejenaru, S. Herr, {\itshape Convolutions of singular measures and applications to the Zakharov system}, J. Funct. Anal. {\bfseries 261} (2011), 478--506.
\bibitem{BHHT09}I. Bejenaru, S. Herr, J. Holmer and D. Tataru, 
{\itshape On the 2D Zakharov system with $L^2$ Schr\"odinger data}, 
Nonlinearity {\bfseries 22} (2009), 1063--1089.
%
\bibitem{BHT10}I. Bejenaru, S. Herr and D. Tataru, 
{\itshape A convolution estimate for two-dimensional hypersurfaces}, 
Rev. Mat. Iberoamericana {\bfseries 26} (2010), 707--728.
%
\bibitem{BKW}J. Bennett, A. Carbery and J. Wright, {\itshape A non-linear generalisation of the Loomis-Whitney inequality and applications}, \emph{Math. Res. Lett}., \textbf{12} (2005), 443--457. 
\bibitem{Bo93}J. Bourgain, {\itshape Fourier transform restriction phenomena for certain lattice
  subsets and applications to nonlinear evolution equations. I, II.},
 Geom. Funct. Anal. {\bfseries 3} (1993), 107--156, 209--262. 
%
%
%
\bibitem{Ch}M. Christ, {\itshape Illposedness of a Schr\"odinger equation with derivative nonlinearity}, preprint  (http://citeseerx.ist.psu.edu/viewdoc/summary?doi=10.1.1.70.1363). 
%
\bibitem{CC04}M. Colin and T. Colin, {\itshape On a quasilinear Zakharov system describing laser-plasma interactions},
  Differential Integral Equations., {\bfseries 17} (2004), 297--330. 
%
\bibitem{CC06}M. Colin and T. Colin, {\itshape A numerical model for the Raman amplification for laser-plasma interaction},
  J. Comput. Appl. Math., {\bfseries 193} (2006), 535--562. 
%
\bibitem{CCO09_1}M. Colin, T. Colin and M. Ohta, {\itshape Stability of solitary waves for a system of nonlinear
Schr\"odinger equations with three wave interaction},
 Ann. Inst. H. Poincar\'e Anal. Non
lin\'eaire., {\bfseries 6} (2009), 2211--2226. 
%
%
%
%
\bibitem{GTV97} J. Ginibre, Y. Tsutsumi and G. Velo, {\itshape On the Cauchy problem for the Zakharov system},  J. Funct. Anal., {\bfseries 151} (1997), 384--436. 
%
%
%
%
\bibitem{HLN11}N. Hayashi, C. Li and P. Naumkin, {\itshape On a system of nonlinear Schr\"odinger equations
in 2d},
  Differential Integral Equations., {\bfseries 24} (2011), 417--434. 
%
\bibitem{HLO11}N. Hayashi, C. Li and T. Ozawa, {\itshape Small data scattering for a system of nonlinear
Schr\"odinger equations},
  Differ. Equ. Appl., {\bfseries 3} (2011), (2011), 415--426. 
%
\bibitem{HOT13}N. Hayashi, T. Ozawa and K. Tanaka, {\itshape On a system of nonlinear Schr\"odinger equations with quadratic interaction},
  Ann. Inst. H. Poincar\'e., {\bfseries 30} (2013), 661--690. 
%
%
\bibitem{Hi}H. Hirayama, {\itshape Well-posedness  and scattering for a system of quadratic derivative
nonlinear Schr\"odinger equations with low regularity initial data}, Comm. Pure Appl. Anal., {\bfseries 13} (2014), 1563--1591. 
%
%
\bibitem{IKO16}M. Ikeda, N. Kishimoto and M. Okamoto, {\itshape Well-posedness for a quadratic derivative nonlinear Schr\"odinger system at the critical regularity}, 
J. Funct. Anal. {\bfseries 271} (2016), 747--798. 
%
\bibitem{IKS13}M. Ikeda, S. Katayama and H. Sunagawa, {\itshape Null structure in a system of quadratic derivative nonlinear Schr\"odinger equations}, 
Ann. Inst. H. Poincar\'e, {\bfseries 16} (2015), 535--567. 
%
%
%
%
\bibitem{LW} L. Loomis and H. Whitney, {\itshape An inequality related to the isoperimetric inequality}, Bull. Am. Math. Soc. {\bfseries 55} (1949), 961--962.
\bibitem{Mi85}S. Mizohata, {\itshape On the Cauchy problem},
 Notes and Reports in Mathematics in Science and Engineering, Science Press \& Academic Press., {\bfseries 3} (1985). 
%
%
\bibitem{OS13}T. Ozawa and H. Sunagawa, {\itshape Small data blow-up for a system of nonlinear Schrodinger equations},
  J. Math. Anal. Appl., {\bfseries 399} (2013), 147--155. 
%
%
%
\end{thebibliography}
\end{document}